\documentclass[11pt,reqno]{amsart}
\usepackage[T1]{fontenc}
\usepackage[utf8]{inputenc}
\usepackage{lmodern}
\usepackage[letterpaper,margin=1in]{geometry}
\usepackage{amsmath,amssymb,mathtools}
\usepackage{microtype}
\usepackage{hyperref,comment}
\usepackage{xcolor}
\usepackage{tikz}
\usepackage{pgfplots}
     \pgfplotsset{compat=1.18}
\hypersetup{hidelinks,pdftitle={Sharp derivative inequalities for uncentered maximal functions},pdfauthor={Jose Madrid},pdfsubject={Sharp derivative inequalities, Riesz variation, and classical endpoints on the real line}}
\input{glyphtounicode}
\newtheorem{theorem}{Theorem}
\newtheorem{proposition}[theorem]{Proposition}
\newtheorem{lemma}[theorem]{Lemma}

\theoremstyle{remark}
\newtheorem{remark}[theorem]{Remark}
\numberwithin{equation}{section}
\newcommand{\R}{\mathbb R}
\newcommand{\M}{\widetilde M}
\newcommand{\norm}[1]{\left\lVert#1\right\rVert}
\DeclareMathOperator{\Lip}{Lip}
\DeclareMathOperator{\Var}{Var}
\DeclareMathOperator{\supp}{supp}
\newcommand{\AC}{\mathrm{AC}}

\allowdisplaybreaks[1]
\title[Sharp derivative inequalities for maximal functions]{Sharp derivative inequalities for uncentered maximal functions}
\subjclass[2020]{Primary 42B25; Secondary 26A45, 46E35.}
\keywords{Uncentered maximal operator, sharp derivative inequality, Riesz variation, Sobolev spaces, bounded variation.}

\begin{document}
\author[Sung-Yi Liao]{Sung-Yi Liao}
\address{Department of Mathematics, Virginia Polytechnic Institute and State University, 225 Stanger Street, Blacksburg, VA 24061-1026, USA}
\email{sungyi@vt.edu}
\author[Jos\'e Madrid]{Jos\'e Madrid}
\address{Department of Mathematics, Virginia Polytechnic Institute and State University, 225 Stanger Street, Blacksburg, VA 24061-1026, USA}
\email{josemadrid@vt.edu}
\author[Eyvindur Ari Palsson]{Eyvindur Ari Palsson}
\address{Department of Mathematics, Virginia Polytechnic Institute and State University, 225 Stanger Street, Blacksburg, VA 24061-1026, USA}
\email{palsson@vt.edu}

\begin{abstract}
We determine the optimal constant in the $L^p$-derivative inequality for the uncentered Hardy--Littlewood maximal operator on the real line for every $1<p<\infty$. It is the unique number $\gamma_p\in [0,1]$ satisfying $(2p-1)\gamma_p+(p-1)\gamma_p^{(2p-1)/(p-1)}=p$. As $p$ goes from $1$ to $+\infty$, $\gamma_p$ decreases from $1$ to $\sqrt2-1$, and at the endpoints the limiting arguments recover the classical total-variation and Lipschitz inequalities, with the qualitative regularity ingredient at the $BV$ endpoint supplied by Aldaz and P\'erez L\'azaro.
\end{abstract}
\maketitle

\section{Introduction}

We study the extent to which the uncentered Hardy--Littlewood maximal operator decreases derivative seminorms on the real line. Our main result determines the optimal constant, which is characterized by a scalar equation, in the $L^p$-derivative inequality for every $1<p<\infty$. Additionally, an explicit family of even, compactly supported functions witnesses its sharpness. The result connects the sharp total-variation inequality of Aldaz and P\'erez L\'azaro~\cite{AP} with the sharp Lipschitz inequality of Aldaz, Colzani, and P\'erez L\'azaro~\cite{ACP}. 

\subsection{Background and notation}
For $f\in L^1_{\mathrm{loc}}(\R)$, define the uncentered Hardy--Littlewood maximal operator by
\begin{equation}\label{eq:defM}
 \M f(x):=
 \sup_{\substack{r,s\ge0\\r+s>0}}
 \frac{1}{r+s}\int_{x-r}^{x+s}|f(t)|\,dt.
\end{equation}
The supremum is over all finite intervals containing $x$. All norms and integrals without a specified domain are over $\R$.

Kinnunen~\cite{Kinnunen} initiated the Sobolev regularity theory of maximal functions by proving boundedness on $W^{1,p}(\R^d)$ for $1<p\leq\infty$. His proof establishes pointwise control of the weak derivatives. Although his original formulation uses the centered operator, the same argument applies to the uncentered operator. In one dimension it gives
\[
 |(\M f)'(x)|\le\M(f')(x)
 \quad\text{for almost every }x\in\R,
 \qquad f\in W^{1,p}(\R).
\]
Here $\M(f')=\M(|f'|)$ by definition. As a corollary, taking $L^p$ norms yields
\begin{equation}\label{eq:classical-derivative}
 \norm{(\M f)'}_p\le\norm{\M(f')}_p
 \le C_p\norm{f'}_p,
\end{equation}
where $C_p=\norm{\M}_{L^p(\R)\to L^p(\R)}$. Grafakos and Montgomery-Smith~\cite{GMS} determined this operator norm $C_p$: it is the unique positive solution of
\begin{equation}\label{eq:operator_norm}
(p-1)x^p-px^{p-1}-1=0.
\end{equation}
In particular, $(p-1)x=p+x^{1-p}$, so $C_p>p/(p-1)$ and $C_p\to\infty$ as $p\downarrow1$. Besides the boundedness, the map $f\mapsto (\M f)'$, in the weak derivative sense, is continuous from $W^{1,p}(\R)\to L^p(\R)$ for $1\leq p<\infty$ \cite{CMP,L}.

Nevertheless, the bound~\eqref{eq:classical-derivative} provides only an upper bound and does not capture the derivative's sharp behavior. In particular, as $p\downarrow1$, $C_p\to\infty$. On the other hand, Aldaz and P\'erez L\'azaro~\cite{AP} proved that the maximal function of a bounded-variation function is absolutely continuous and has no greater total variation. In particular, for $f\in W^{1,1}(\mathbb{R})$,
\begin{equation}\label{eq:derivative_bound}
\norm{(\M f)'}_1\leq\norm{f'}_1.
\end{equation}
%Also, in $W^{1,1}$ setting, Liu, Chen, and Wu \cite{LCW} use a different approach to obtain the same result. 
Thus, the classical $L^p$ operator norm diverges as $p\downarrow1$, whereas the sharp derivative bound at $p=1$ has a constant of $1$. 

Theorem~\ref{thm:main} provides a better bound than the $C_p$ bound for the first derivative case, and determines the optimal constants: it replaces $C_p>1$ with $\gamma_p<1$ for every $p>1$, with $\gamma_p\to1$ as $p\downarrow1$. At the other endpoint $p=\infty$, Aldaz, Colzani, and P\'erez L\'azaro~\cite{ACP} proved the sharp Lipschitz inequality with constant $\sqrt{2}- 1$, and our result also provides a different approach to recover these endpoint inequalities, respectively.

Grafakos and Kinnunen~\cite[Theorem 2.1 and Corollary 2.3]{GK} extended the sharp $L^p$ bound result \eqref{eq:operator_norm}, together with a refined weak-type $(1,1)$ estimate, to nonnegative Borel measures $\mu$ on $\R$ without a doubling assumption on the Borel measure $\mu$. Their constants are independent of the measure and sharp for Lebesgue measure. 

As for the discrete setting, for a function $f:\mathbb{Z}\to\mathbb{R}$, define 
\[\M_\mathbb{Z} f(n):=\sup_{\substack{r,s\geq0}}\frac{1}{r+s+1}\sum_{i=-r}^s|f(n+i)|,\qquad f'(n):=f(n+1)-f(n).\]
Then, the corresponding derivative map $f\mapsto (\M_\mathbb{Z} f)'$ is bounded from $W^{1,p}(\mathbb{Z})$ to $\ell^p(\mathbb{Z})$ for $1\leq p\leq\infty$. Here, $W^{1,p}(\mathbb{Z})$ is the Sobolev space. Bober, Carneiro, Hughes, and Pierce \cite[Theorem 1]{BCHP} obtained the discrete version of the sharp first derivative estimate for the case $p=1$:
\begin{equation}\label{eq:l1Z_bound}
\norm{(\M f)'}_{\ell^1(\mathbb{Z})}\leq\norm{f'}_{\ell^1(\mathbb{Z})}.
\end{equation}
For the case $p=\infty$, Gonz\'alez-Riquelme and the second author \cite[Theorem 20]{GM} proved the discrete version of the sharp first derivative estimate:
\begin{equation}\label{eq:linftyZ_bound}
\norm{(\M f)'}_{\ell^\infty(\mathbb{Z})}\leq\frac{1}{2}\norm{f'}_{\ell^\infty(\mathbb{Z})}.
\end{equation}
For $1<p<\infty$, Weigt and the authors \cite[Theorems 2.3 and 2.4]{LMPW} determined the discrete version of the sharp first-difference constant for characteristic functions and showed that the constant for general functions is strictly larger. 

It is noteworthy that for $p=1$, the optimal constant is the same in both the discrete setting \eqref{eq:l1Z_bound} and the continuous setting \eqref{eq:derivative_bound}. However, on the other endpoint, for $p=\infty$, the optimal constant in the estimate \eqref{eq:endpoint-Lip} is $\sqrt{2}-1$ in the continuous setting while the constant is $\frac{1}{2}$ in the discrete setting \eqref{eq:linftyZ_bound}. Nevertheless, given a compact interval $I\subseteq\mathbb{R}$, one may consider the interval restricted maximal operator 
\[\M_I f(x):=\sup_{\substack{x\in J\subset I\\0<|J|<\infty}}\frac{1}{|J|}\int_J |f|,\]
which will share the same optimal constant with the discrete setting at $p=\infty$, see \cite[Corollary 4.5]{ACP}. Therefore, the corresponding estimate between the integer and the real number is not always straightforward. 

Besides the domains $\mathbb{R}$ and $\mathbb{Z}$, one may also consider the higher dimensional case, like $\mathbb{R}^n,\mathbb{Z}^n$, and proper subsets of $\mathbb{R}^n$ \cite{C,CM,HO}. Additionally, beyond the uncentered maximal operator, we may study a similar question for the centered maximal operator, non-tangential maximal operator, and the maximal operator of convolution type \cite{C,CS,HO,R}. 

The arguments and results below all concern derivative seminorms for the continuous uncentered maximal operator.

For $1\le p<\infty$ we use the Riesz $p$-variation
\begin{equation}\label{eq:riesz}
 \Var_p(f):=\sup_{x_0<\cdots<x_m}
 \left(\sum_{j=1}^m
 \frac{|f(x_j)-f(x_{j-1})|^p}{|x_j-x_{j-1}|^{p-1}}
 \right)^{1/p}.
\end{equation}
We write $\Var(f)=\Var_1(f)$. If $f$ is locally absolutely continuous and $f'\in L^p$, then $\Var_p(f)=\norm{f'}_p$; conversely, finite Riesz $p$-variation for $p>1$ implies this regularity and identity; see~\cite[Chapter IX \textsection 4, Theorem 7]{N}. For a bounded-variation function $f$, $Df$ denotes the distributional derivative and $|Df|(\R)$ the mass of its total variation measure. We use the notation $BV$ here in the homogeneous sense. In other words, $f$ is $BV$ means $f\in L^1_{\mathrm{loc}}(\R)$ and $|Df|(\R)<\infty$, without requiring $f\in L^1(\R)$. Notably, $|Df|(\R)$ is independent of changes on null sets, whereas $\Var(f)$ is not. For example, a characteristic function $\chi_{\{0\}}$ has distributional variation zero but pointwise variation two. However, they will agree for the right-continuous representative $x\mapsto f(x+)$; see~\cite[Lemma 3.7]{AP} for the corresponding identification using a canonical representative. 

\subsection{The sharp derivative and Riesz variation inequality}
For $1<p<\infty$, let $\gamma_p\in [0,1]$ be the unique solution of
\begin{equation}\label{eq:gamma}
 (2p-1)x+(p-1)x^{(2p-1)/(p-1)}=p\qquad\text{for}\qquad x\in [0,1].
\end{equation}
The left side is strictly increasing in $x$, so the intermediate value theorem immediately yields the existence and uniqueness of $\gamma_p$. Let $\AC_{\mathrm{loc}}(\mathbb{R})$ be the space of locally absolutely continuous functions on $\mathbb{R}$.

\begin{theorem}\label{thm:main}
Let $1<p<\infty$. Suppose that $f\in\AC_{\mathrm{loc}}(\R)$, $f'\in L^p(\R)$, and that $\M f$ is finite at some point. Then $\M f$ is finite everywhere, locally absolutely continuous, and
\begin{equation}\label{eq:main}
 \norm{(\M f)'}_p=\Var_p(\M f)
 \le\gamma_p\norm{f'}_p=\gamma_p\Var_p(f).
\end{equation}
In particular, the inequality holds for every $f\in W^{1,p}(\R)$, which implies $\M f\in W^{1,p}(\R)$. The constant $\gamma_p$ is optimal and there is a sequence of nonnegative and even functions $f_n$ supported in $[-1,1]$ and nonincreasing on $[0,1)$ such that 
\[\lim_{n\to\infty}\frac{\norm{(\M f_n)'}_p}{\norm{f_n'}_p}=\gamma_p.\]
\end{theorem}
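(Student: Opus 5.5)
The plan is to work on the disconnecting set $E=\{\M f>|f|\}$, which is open once we know $\M f$ is continuous. First the regularity. If $\M f(x_0)<\infty$, then $|f|$ has finite averages over all intervals containing $x_0$. Since $f'\in L^p$, Hölder gives $|f(x)-f(y)|\le\norm{f'}_p|x-y|^{1-1/p}$. From this, $\M f$ is finite everywhere and locally Hölder continuous: averages over intervals containing $x$ and $y$ differ by $O(|x-y|^{1-1/p})$ after a translation argument. Moreover $\M f\ge|f|$ everywhere. On $E$, the standard one-dimensional fact (as in Aldaz--P\'erez L\'azaro) is that $\M f$ has no local maximum in the interior of any component. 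So on each component $(a,b)$ of $E$, $\M f$ is either monotone or decreasing then increasing, i.e.\ it has a single interior minimum at some $c$. At the endpoints $\M f(a)=|f(a)|$ and $\M f(b)=|f(b)|$. Off $E$ we have $\M f=|f|$, and at density points of $E^c$ we get $|(\M f)'|=|f'|$ a.e. This gives local absolute continuity on compacts, modulo a countable-sum argument that uses the Riesz variation characterization of \cite{N}. The identities $\norm{\cdot}_p=\Var_p$ in \eqref{eq:main} then follow from that characterization.

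The core of the proof is a local inequality on each component. It suffices to show, for each component $(a,b)$ of $E$ with minimum point $c$,
\[
\int_a^b|(\M f)'|^p\le\gamma_p^p\int_{I}|f'|^p ,
\]
where the intervals $I$ can be chosen disjoint for distinct components and also disjoint from $E^c$, or at least charged with total multiplicity one. Together with $|(\M f)'|=|f'|$ a.e.\ on $E^c$ and $\gamma_p\le1$, this gives the global bound after a suitable bookkeeping. The natural choice is to take $I$ as part of $(a,b)$ itself. On the increasing branch $(c,b)$, $\M f$ rises from $m=\M f(c)$ to $|f(b)|$. By Jensen/convexity, $\int_c^b|(\M f)'|^p\ge(|f(b)|-m)^p/(b-c)^{p-1}$, and a linear profile is extremal. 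On the other side we must bound $\int|f'|^p$ on $(c,b)$ from below. For this we use that the averages of $|f|$ over intervals containing points of $(c,b)$ are at most $\M f$, which forces $|f|$ to stay below a concave-type envelope. The resulting extremal problem is one-dimensional. Minimize $\int|f'|^p$ subject to $f(b)=h$ and to the constraint that averages near $c$ do not exceed $m$, while maximizing $\int|(\M f)'|^p$. I would solve it by a variational (Euler--Lagrange) computation. The extremizer should have $f'$ a power on an interval, namely $f$ of the form $h(1-(t/L)^{p/(p-1)})$-type profiles. This is consistent with the exponent $(2p-1)/(p-1)$ in \eqref{eq:gamma}. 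Optimizing over the ratio of the plateau length to the decay length should produce exactly $(2p-1)\gamma+(p-1)\gamma^{(2p-1)/(p-1)}=p$.

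Sharpness uses the extremizer directly. Take $f_n$ even, supported in $[-1,1]$, nonincreasing on $[0,1)$, and equal to (a smoothed, $W^{1,p}$) version of the extremal profile found above. For such $f$, $\M f$ is explicit. It equals $f$ near $0$, and beyond the tangency point $x_*$ it is given by the average over $[-x_*,x]$-type intervals that reach the origin region. For $x$ past the support, $\M f(x)=\int f/(x+1)$-like, which is a tail that contributes to $\int|(\M f)'|^p$. Computing both $p$-norms and passing to the limit of the optimizing parameters gives the ratio $\gamma_p$. I would first verify this computation, since it pins down $\gamma_p$ and guides the upper bound.

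The main obstacle is the local extremal inequality on a component. The difficulty is controlling the unbounded increasing tails and correctly allocating $\int|f'|^p$ between the two branches of a component and its neighbours. I would first attempt a rearrangement reduction to even, unimodal $f$, which is justified heuristically by the sharpness family. I would then treat the monotone decreasing $\M f$ branch with the convexity estimate and the explicit Euler--Lagrange solution.
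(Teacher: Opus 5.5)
\textbf{There is a genuine gap: the upper bound $\norm{(\M f)'}_p\le\gamma_p\norm{f'}_p$ is not proved, and the localization you propose for it is false.} You yourself flag the ``local extremal inequality on a component'' as the main obstacle, and that is where all the work lies. You suggest $\int_a^b|(\M f)'|^p\le\gamma_p^p\int_I|f'|^p$ on each component $(a,b)$ of $\{\M f>|f|\}$, with $I\subset(a,b)$ as the natural choice. But $(\M f)'(x)$ is driven by $f$ near the far endpoint of the maximizing interval, which typically lies in another component.

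Here is a counterexample. Take $f(t)=1+t/L$ on $[-L,0]$, $f(t)=1-t$ on $[0,1]$, and $f=0$ elsewhere. Then $\{\M f>f\}=\R\setminus\{0\}$. For $-L<x<0$ the maximizing interval is $[x,\beta|x|]$ with $\beta=\sqrt{1+1/L}-1$, so $\M f(x)=1-\beta|x|$ and
\[
\frac{\int_{-\infty}^0|(\M f)'|^p}{\int_{-\infty}^0|f'|^p}\ \ge\ \frac{L\beta^p}{L^{1-p}}=(L\beta)^p\ \longrightarrow\ 2^{-p}\qquad(L\to\infty).
\]
Since $\gamma_p\to\sqrt2-1<\tfrac12$ (already $\gamma_4<\tfrac12$), this exceeds $\gamma_p^p$ for large $p$. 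The excess must be paid for by $f'$ on $(0,1)$, which lies in the other component. So the bookkeeping you postpone (``charged with total multiplicity one'') is the whole difficulty, and it is not supplied.

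Your other tools do not close the gap:
\begin{itemize}
\item The Jensen bound $\int_c^b|(\M f)'|^p\ge(|f(b)|-m)^p/(b-c)^{p-1}$ is a lower bound, so it points the wrong way for an upper estimate.
\item The proposed extremal problem couples $f$ with the nonlocal $\M f$, so it is not a well-posed Euler--Lagrange problem.
\item The rearrangement reduction is unavailable. The inequality $\norm{(\M f)'}_p\le\norm{(\M f^*)'}_p$ fails for two distant bumps, so you would have to compare ratios directly, which is as hard as the theorem.
\item Local absolute continuity, in the paper, comes out of the inequality itself: approximation by piecewise-affine inputs, lower semicontinuity of $\Var_p$, and Riesz's characterization. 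So that part of your plan also rests on the missing step.
\end{itemize}

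\textbf{The missing idea is the paper's endpoint calculus.} Take $f\ge0$ piecewise affine. On each smooth piece where $(\M f)'\neq0$, the maximizing interval has $x$ as one endpoint and $a(x)$ as the other, and
\begin{itemize}
\item $f(a(x))=\M f(x)$ and $(x-a(x))(\M f)'(x)=f(x)-\M f(x)$;
\item $a'(x)=(\M f)'(x)/f'(a(x))<0$;
\item Weigt's identity holds: $f'=(2-a')(\M f)'+(x-a)(\M f)''$.
\end{itemize}
Multiply Weigt's identity by $(\M f)'|(\M f)'|^{p-2}$ and integrate by parts. The boundary terms are nonnegative because $(\M f)'$ only jumps upward inside $\{\M f>f\}$. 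This gives
\[
\int f'(\M f)'|(\M f)'|^{p-2}\ \ge\ \Bigl(2-\tfrac1p\Bigr)\int|(\M f)'|^p+\tfrac{p-1}{p}\int_E\frac{|(\M f)'|^{p+1}}{|f'(a(x))|}.
\]
Injectivity of $x\mapsto a(x)$, with disjoint images from increasing and decreasing pieces, gives the budget $\int_E|f'(a(x))|^{p-1}|(\M f)'|\le\norm{f'}_p^p$. Thus $\norm{f'}_p$ enters twice, once through $f'(x)$ and once through $f'(a(x))$. Two applications of H\"older then yield $(2p-1)\rho+(p-1)\rho^{(2p-1)/(p-1)}\le p$ for $\rho=\norm{(\M f)'}_p/\norm{f'}_p$. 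The constant comes from this combination, not from a per-component comparison.

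\textbf{Your guessed extremal is also off.} The sharp family is the cusp $(1-|x|^\theta)_+$ with $\theta\downarrow 1-1/p$, the critical H\"older exponent. For it, $\M f_\theta=1-H_\theta|x|^\theta$ on $(-1,1)$, with $H_\theta\to\gamma_p$. The exponent $(2p-1)/(p-1)=1+1/\alpha$, where $\alpha=1-1/p$, reflects the optimal ratio $t_\alpha=H_\alpha^{1/\alpha}$ in the maximizing interval $[-t_\alpha x,x]$. It does not come from a plateau with a $p/(p-1)$-power profile.
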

\begin{remark}
Specifically, for $p=2$, we have
\[\gamma_2=\sqrt[3]{\sqrt2+1}-\sqrt[3]{\sqrt2-1}\approx0.5961,
 \qquad \gamma_2^3+3\gamma_2=2.\]

Additionally, in order to obtain a rough upper bound of $\gamma_p$, one may ignore the second term $(p-1)x^{(2p-1)/(p-1)}$ in \eqref{eq:gamma} and obtain the estimate
\[\gamma_p\leq\frac{p}{2p-1}.\]

Compared to the argument in \eqref{eq:classical-derivative}, which gives a bound $C_2=\sqrt{2}+1\approx2.4142$ for $p=2$, our result indeed improves the classical bound.
\end{remark}

Note that every $W^{1,p}(\R)$ function has a bounded, locally absolutely continuous representative, so the finiteness hypothesis is automatic; the classical $L^p$ maximal inequality also gives $\M f\in L^p$. The one-point finiteness assumption is needed in the more general homogeneous formulation: For example, consider $f(x)=\log(2+|x|)$. Then, we have $f\in\AC_{\mathrm{loc}}(\R)$ and $f'\in L^p$ for all $p>1$, but $\M f\equiv\infty$. 

\subsection{The optimal constant and the classical endpoints}
The constant in Theorem~\ref{thm:main} has an exact relation to the sharp H\"older constants of Aldaz, Colzani, and P\'erez L\'azaro~\cite[Remark 3.3, formula (3.3.1)]{ACP}. For $0<\alpha\le1$, put
\begin{equation}\label{eq:holder-constant}
 H_\alpha:=\min_{0\le t\le1}
 \frac{1+t^{1+\alpha}}{(1+\alpha)(1+t)}.
\end{equation}
Their theorem identifies $H_\alpha$ as the optimal constant in
\begin{equation}\label{eq:Holder_norm}
[\M f]_{C^{0,\alpha}}\le H_\alpha[f]_{C^{0,\alpha}},
\end{equation}
for all globally $\alpha$-H\"{o}lder functions $f$ with finite maximal function, where
\[
 [f]_{C^{0,\alpha}}:=\sup_{x\ne y}
 \frac{|f(x)-f(y)|}{|x-y|^\alpha},
 \quad\text{ especially }\quad\Lip(f)=[f]_{C^{0,1}}.
\]
The following proposition demonstrates the relation between $H_\alpha$ and $\gamma_p$ and the limit properties:
\begin{proposition}\label{prop:coefficient}
For every $1<p<\infty$,
\begin{equation}\label{eq:critical-holder}
 \gamma_p=H_{1-1/p}.
\end{equation}
The function $p\mapsto\gamma_p$ is strictly decreasing, with
\begin{equation}\label{eq:limits}
 \lim_{p\downarrow1}\gamma_p=1,
 \qquad \lim_{p\to\infty}\gamma_p=\sqrt2-1.
\end{equation}
If $r=\sqrt2-1$, then
\begin{equation}\label{eq:asymptotic}
 \gamma_p=r+\frac{1-r-r^2\log r}{2(1+r)}\,\frac1p+O(p^{-2}).
\end{equation}
\end{proposition}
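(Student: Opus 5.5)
The plan is to compute $H_\alpha$ explicitly through its critical-point equation and then match it to \eqref{eq:gamma}. After that, monotonicity and the limits follow from elementary properties of the family of functions being minimized, and the asymptotic expansion follows from the implicit function theorem.

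\textbf{Step 1: the identity \eqref{eq:critical-holder}.} Fix $\alpha\in(0,1]$ and put
\[
\varphi_\alpha(t)=\frac{1+t^{1+\alpha}}{(1+\alpha)(1+t)} .
\]
Differentiating, the sign of $\varphi_\alpha'(t)$ equals the sign of
\[
g(t):=\alpha t^{1+\alpha}+(1+\alpha)t^\alpha-1 .
\]
This function $g$ is strictly increasing on $[0,1]$, with $g(0)=-1$ and $g(1)=2\alpha>0$. Hence $\varphi_\alpha$ has a unique minimizer $t_*\in(0,1)$, characterized by $g(t_*)=0$.

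The critical equation can be rewritten as $1+t_*^{1+\alpha}=(1+\alpha)t_*^\alpha(1+t_*)$. Substituting this into $\varphi_\alpha$ gives $H_\alpha=t_*^\alpha$.

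Now set $x=t_*^\alpha$ and $\alpha=1-1/p$. Then $(1+\alpha)/\alpha=(2p-1)/(p-1)$, so $g(t_*)=0$ becomes
\[
\alpha x^{(2p-1)/(p-1)}+(1+\alpha)x=1 .
\]
Multiplying by $p$ gives exactly \eqref{eq:gamma}. Since $x\in(0,1)$, uniqueness of the solution of \eqref{eq:gamma} yields $\gamma_p=H_{1-1/p}$.

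\textbf{Step 2: monotonicity and limits.} For each fixed $t\in(0,1)$, the map $\alpha\mapsto\varphi_\alpha(t)$ is strictly decreasing: the numerator $1+t^{1+\alpha}$ decreases and the denominator $(1+\alpha)(1+t)$ increases.

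Take $\alpha<\beta$ and let $t_\beta$ be the minimizer of $\varphi_\beta$. Then
\[
H_\beta=\varphi_\beta(t_\beta)<\varphi_\alpha(t_\beta)\le H_\alpha .
\]
Since $p\mapsto 1-1/p$ is increasing, $p\mapsto\gamma_p$ is strictly decreasing.

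The function $(\alpha,t)\mapsto\varphi_\alpha(t)$ is jointly continuous on $[0,1]\times[0,1]$, so $\alpha\mapsto H_\alpha$ is continuous on $[0,1]$. This gives the two limits:
\begin{itemize}
\item As $p\downarrow1$, we have $\alpha\to0$, and $\varphi_0\equiv1$, so $\gamma_p\to 1$.
\item As $p\to\infty$, we have $\alpha\to1$. The equation $g(t)=0$ becomes $t^2+2t-1=0$, so $t_*=\sqrt2-1$ and $H_1=t_*=\sqrt2-1$.
\end{itemize}

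\textbf{Step 3: the expansion \eqref{eq:asymptotic}.} Put $\varepsilon=1/p$ and define
\[
G(x,\varepsilon)=(1-\varepsilon)x^{(2-\varepsilon)/(1-\varepsilon)}+(2-\varepsilon)x-1 ,
\]
so that $G(\gamma_p,1/p)=0$. The function $G$ is smooth near $(r,0)$, and $G(r,0)=r^2+2r-1=0$.

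The relevant partial derivatives at $(r,0)$ are:
\begin{itemize}
\item $\partial_xG(r,0)=2r+2\neq0$;
\item $\partial_\varepsilon G(r,0)=-r^2+r^2\log r-r$, using that $\frac{d}{d\varepsilon}\frac{2-\varepsilon}{1-\varepsilon}\big|_{\varepsilon=0}=1$.
\end{itemize}

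By the implicit function theorem there is a smooth branch $x(\varepsilon)$ through $(r,0)$. By the continuity of Step 2 and uniqueness of the root, this branch coincides with $\gamma_{1/\varepsilon}$. Its derivative at $0$ is
\[
x'(0)=\frac{r^2+r-r^2\log r}{2(1+r)} .
\]
Using $r^2=1-2r$, we get $r^2+r=1-r$, which gives exactly \eqref{eq:asymptotic} with an $O(\varepsilon^2)=O(p^{-2})$ remainder.

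The only point needing care is identifying the implicit-function branch with $\gamma_p$, and uniqueness handles it. I expect no real obstacle beyond bookkeeping in the derivative computations.
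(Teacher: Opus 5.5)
Your proposal is correct and follows essentially the same route as the paper: the critical-point equation gives $H_\alpha=t_\alpha^\alpha$ and hence $(1+\alpha)H_\alpha+\alpha H_\alpha^{1+1/\alpha}=1$, pointwise monotonicity in $\alpha$ of the minimized expression gives strict decrease, continuity of the minimum gives the limits, and the implicit function theorem at $\alpha=1$ (equivalently $1/p=0$) gives the expansion. You also make explicit two points the paper leaves implicit: continuity of $\alpha\mapsto H_\alpha$ up to $\alpha=1$, and the identification of the implicit-function branch with $\gamma_p$.
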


The scalar minimum $H_\alpha$ and \eqref{eq:Holder_norm} are due to~\cite{ACP}. Our new assertion is that this same number, at $\alpha=1-1/p$, is the exact coefficient for the $L^p$ norm of the derivative. We recall the classical endpoint results below as background, and recover their quantitative bounds in Section~\ref{sec:endpoints} by taking the limit.

For every $f\in L^1_{\mathrm{loc}}(\R)$ with $|Df|(\mathbb{R})<\infty$, there is a bounded-variation representative that coincides with $f$ almost everywhere. In this case, Aldaz and P\'erez L\'azaro~\cite[Theorem 2.5]{AP} proved that $\M f$ is absolutely continuous, and that
\begin{equation}\label{eq:endpoint-BV}
 \norm{(\M f)'}_1=\Var(\M f)\le|Df|(\R).
\end{equation}
In particular, the right side is $\norm{f'}_1$ when $f\in W^{1,1}(\R)$. For the other endpoint case, Aldaz, Colzani, and P\'erez L\'azaro~\cite[Corollary 3.6]{ACP} established the sharp whole-line Lipschitz bound
\begin{equation}\label{eq:endpoint-Lip}
 \norm{(\M f)'}_\infty\le(\sqrt2-1)\norm{f'}_\infty
\end{equation}
for globally Lipschitz inputs with finite maximal function. Both results are classical and sharp. Section~\ref{sec:endpoints} explains how their quantitative inequalities follow from Theorem~\ref{thm:main} by limiting arguments and approximation. 

\subsection{Proof strategy}
We first prove Theorem \ref{thm:main} for nonnegative, continuous, compactly supported, and piecewise-affine functions. On the smooth intervals where $(\M f)'$ is nonzero, the maximizing interval is unique and one may verify that the evaluation point $x$ will be one endpoint. We write $a(x)$ for the other endpoint. Local differentiation identities, adapted from Weigt \cite[Lemma 2.2]{W}, relate $(\M f)'$ to $f$ and to the motion of $a(x)$.

The key observation is that the change of variables $x\mapsto a(x)$ counts each part of $\int_\mathbb{R}{|f'|}^p$ at most once. Combining this fact with the local identities, integration by parts, and H\"{o}lder’s inequality yields an estimate relating $\norm{(\M f)'}_p$ to $\norm{f'}_p$. After normalization, this estimate gives the scalar inequality associated with \eqref{eq:gamma}.

\begin{comment}
To prove Theorem \ref{thm:main}, the central issue is to estimate the integrals of certain products of $(\M f)'$ and $|f'|$ while keeping track of which parts of $f$ contribute to the estimate. We begin with nonnegative, compactly supported, and piecewise-affine functions. On each interval where $(\M f)'$ is nonzero, the maximizing interval is unique and has the evaluation point $x$ as one endpoint. We denote the other endpoint by $a(x)$. The local differentiation identities in Weigt~\cite[Lemma 2.2]{Weigt} relate $(\M f)'$ to $f$ and to the motion of $a(x)$. 

The key additional observation is that changing variables from $x$ to $a(x)$ counts each part of the integral of $|f'|^p$ at most once. Combining this observation with the local differentiation identities and integration by parts gives an inequality relating $\|(\M f)'\|_p$ to $\|f'\|_p$. After normalization, this becomes the desired inequality that determines the sharp constant.
\end{comment}
Sections \ref{sec:maximizing_interval} and \ref{sec:smooth_pieces} establish the local formulas, the desired partition, and the injectivity of the endpoint map. Section \ref{sec:proof:piecewise_affine} proves the derivative estimate for piecewise-affine inputs. Section \ref{sec:approximation} extends it by approximation to the full class of inputs in Theorem \ref{thm:main}, and Section \ref{sec:sharpness} establishes sharpness through an explicit family of functions. Finally, Section \ref{sec:endpoints} proves Proposition \ref{prop:coefficient} and derives the quantitative endpoint bounds.

%Section~\ref{sec:ordinary} carries out this argument. To begin with, Sections \ref{sec:maximizing_interval} and \ref{sec:smooth_pieces} start with some introduction and observations of $a(x)$ and $(\M f)'$. Section \ref{sec:proof:piecewise_affine} estimates the integrals of certain products of $(\M f)'$ and $|f'|$ via injectivity, non-negativity, and previous Lemmas. Then, with H\"{o}lder's inequality and normalization, the integral becomes the desired estimate of the ratio $\norm{(\M f)'}_p/\norm{f'}_p$ when $f$ is non-negative, compactly supported, and piecewise-affine. In Section \ref{sec:approximation}, we use an approximation argument to complete the proof of \eqref{eq:main} in Theorem \ref{thm:main} for functions satisfying the conditions in Section \ref{sec:proof:piecewise_affine}. By directly constructing a sequence of functions such that the ratio $\norm{(\M f)'}_p/\norm{f'}_p$ approaches the constant in \eqref{eq:main}, Section \ref{sec:sharpness} finishes the proof of Theorem \ref{thm:main}. Next, Section~\ref{sec:endpoints} proves Proposition \ref{prop:coefficient} and explains how limiting arguments recover the classical endpoint bounds.

\section{Proof of Theorem \ref{thm:main}}\label{sec:ordinary}

We first assume that $f\ge0$ is continuous, compactly supported, and affine on each of finitely many intervals. All differentiation and integration by parts will be performed on finitely many smooth pieces. The passage to general inputs and the sharpness argument conclude this section.

\subsection{Maximizing intervals}\label{sec:maximizing_interval}
By sublinearity, we have
\begin{equation}\label{eq:basic-Lip}
 |\M f(x+h)-\M f(x)|\le\Lip(f)|h|.
\end{equation}
This Lipschitz bound will provide the regularity needed below.

On the open set
\[
 \Omega:=\{x\in\R:\M f(x)>f(x)\},
\]
the supremum defining $\M f(x)$ is attained by a finite, nondegenerate interval. Note that short intervals $I$ have average $\frac{1}{|I|}\int_If$ close to $f(x)$, while an interval of length $\ell$ has average at most $\norm{f}_1/\ell$. The remaining endpoints range over a compact set.

At almost every point of the contact set $\{x\in\mathbb{R}:\M f(x)=f(x)\}$,
\begin{equation}\label{eq:zero-contact}
 (\M f)'(x)=f'(x)=0.
\end{equation}
Indeed, since both $\M f$ and $f$ are Lipschitz continuous, ${(\M f)}'=f'$ on the contact set. Additionally, a nonzero derivative of $f$ at $x$ means a one-sided average exceeds $f(x)$ to first order, so 
\begin{equation}\label{eq:one_side_suffices}
\M f(x)=\max\left\{\frac{1}{h}\int_{[x-h,x]}f,\frac{1}{h}\int_{[x,x+h]}f\right\}>f(x)\text{ for some sufficiently small }h>0,
\end{equation}
which contradicts the assumption $\M f(x)=f(x)$.

The local formulas in the next lemma are the differentiation identities in Weigt~\cite[Lemma 2.2, equations (2.3)--(2.5)]{W}, proved there under a unimodality and two-sided slope hypothesis. We include the details and adapt his proof for completeness in the present piecewise-affine setting.

\begin{lemma}\label{lem:endpoint}
Suppose that $x\in\Omega$, that $\M f$ is $C^1$ in a neighborhood of $x$, and that $(\M f)'(x)<0$. The maximizing interval is unique and has the form $[a(x),x]$, with
\begin{equation}\label{eq:local-identities}
 f(a(x))=\M f(x),
 \qquad
 (\M f)'(x)=\frac{f(x)-\M f(x)}{x-a(x)}.
\end{equation}
Additionally, if $\M f$ is twice differentiable at $x$ and $f'(a(x))\ne0$, one also has
\begin{equation}\label{eq:weigt-identities}
 \begin{split}
 a'(x)&=\frac{(\M f)'(x)}{f'(a(x))},\\
 (\M f)''(x)&=
 \frac{f'(x)-(2-a'(x))(\M f)'(x)}{x-a(x)}.
 \end{split}
\end{equation}
When $(\M f)'(x)>0$, the maximizing interval is $[x,a(x)]$; the same formulas hold, with $x-a(x)<0$.
\end{lemma}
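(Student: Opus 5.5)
Throughout, $f\ge0$ is continuous, compactly supported and piecewise affine, and $x\in\Omega$ with $(\M f)'(x)<0$ and $\M f$ of class $C^1$ near $x$.

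\emph{Step 1: every maximizing interval has $x$ as its right endpoint.} By the discussion preceding~\eqref{eq:zero-contact}, the supremum in~\eqref{eq:defM} is attained. Let $[u,v]\ni x$ be a maximizing interval with average $A=\M f(x)$. Every point $y\in[u,v]$ satisfies $\M f(y)\ge A$, and $\M f(x)=A$. Suppose $v>x$. Then $\M f(y)\ge\M f(x)$ for all $y\in[x,v]$, so the right derivative of $\M f$ at $x$ is $\ge0$. This contradicts $(\M f)'(x)<0$ together with continuity of $(\M f)'$. Hence $v=x$. Since $x\in\Omega$, the interval is nondegenerate, so $u<x$; write $u=a$.

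\emph{Step 2: the two identities~\eqref{eq:local-identities}.} Set $F(s,t)=\frac1{t-s}\int_s^t f$, so that $\M f(y)\ge F(s,y)$ for all $s<y$. Both sides equal $A$ at $(a,x)$.
\begin{itemize}
\item \emph{Endpoint condition at $a$.} Maximality of $[a,x]$ among intervals containing $x$ gives $\partial_sF(a,x)=0$ whenever $a$ is interior to the allowed range; the alternative $a=-\infty$ is impossible because $f$ has compact support. Since $\partial_sF(s,t)=\frac{F(s,t)-f(s)}{t-s}$, this yields $f(a)=A$.
\item \emph{Formula for $(\M f)'(x)$.} The function $y\mapsto \M f(y)-F(a,y)$ is $\ge0$ near $x$ and vanishes at $y=x$. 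Both terms are differentiable, so the derivative of the difference is $0$ at $x$. Therefore
\[
(\M f)'(x)=\partial_tF(a,x)=\frac{f(x)-A}{x-a}.
\]
\end{itemize}

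\emph{Step 3: uniqueness.} Suppose $[a_1,x]$ and $[a_2,x]$ are both maximizing. Step 2 gives $(f(x)-A)/(x-a_i)=(\M f)'(x)$ for $i=1,2$. The numerator $f(x)-A$ is nonzero because $x\in\Omega$, so $x-a_1=x-a_2$ and $a_1=a_2$.

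\emph{Step 4: the identities~\eqref{eq:weigt-identities}.} Here I need $a(\cdot)$ to be differentiable. The first identity in~\eqref{eq:local-identities} holds at every nearby point, because $(\M f)'<0$ persists by continuity. From
\[
a(y)=y-\frac{f(y)-\M f(y)}{(\M f)'(y)}
\]
one gets continuity of $a$. Once $a$ is known to be differentiable, differentiating $f(a(y))=\M f(y)$ at $y=x$ gives
\[
f'(a(x))\,a'(x)=(\M f)'(x),
\]
which is the formula for $a'(x)$. Now differentiate the relation $(\M f)'(y)\,(y-a(y))=f(y)-\M f(y)$ to obtain
\[
(\M f)''(x)\,(x-a)+(\M f)'(x)\,(1-a'(x))=f'(x)-(\M f)'(x).
\]
Rearranging gives the stated formula for $(\M f)''(x)$.

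The delicate point is justifying that $a$ is differentiable at $x$, and that $f$ is differentiable at $x$ and at $a(x)$. Two facts handle this:
\begin{itemize}
\item From the displayed formula for $a(y)$, $a$ is differentiable at every point where $f$ is differentiable, because $(\M f)'$ is differentiable at $x$ by hypothesis and nonzero.
\item $f$ is piecewise affine, and the hypotheses implicitly require $f'(x)$ and $f'(a(x))$ to exist. On the finitely many breakpoints, I would read $f'$ as the one-sided derivative in the appropriate direction, or exclude them as a finite set.
\end{itemize}
With this, the chain rule for $f\circ a$ is legitimate at $x$.

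\emph{Step 5: the case $(\M f)'(x)>0$.} This follows by applying the above argument to the reflection $y\mapsto f(-y)$, which converts the interval to $[x,a(x)]$. The formulas are unchanged, with $x-a(x)<0$.
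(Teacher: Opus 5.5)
Your proposal is correct and follows essentially the same route as the paper: exclude maximizing intervals that extend to the right of $x$ using the sign of $(\M f)'(x)$, apply the first-derivative test at the free endpoint and the touching-from-below argument at $x$, read off uniqueness from the resulting formula for $a$, differentiate the two identities, and reflect for the increasing case. Your Step~1 excludes maximizing intervals with $x$ in their interior somewhat more cleanly than the paper's reduction to one-sided intervals: any maximizing $[u,v]$ with $v>x$ forces $\M f\ge\M f(x)$ on $[x,v]$. You also state explicitly that the formula for $a(y)$ needs $f$ to be differentiable at $x$, which the paper leaves implicit.
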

\begin{proof}

Suppose first that $(\widetilde M f)'(x)<0$. For any interval $[a,b]$ containing $x$, since 
\[\frac{1}{b-a}\int_a^bf\leq\max\left\{\frac{1}{x-a}\int_a^xf,\frac{1}{b-x}\int_x^bf\right\},\]
it suffices to consider those intervals such that $x$ is an endpoint when taking the supremum of $\M f$. Additionally, if the supremum is achieved by an interval in the form of $[x,b]$, then $\M f(x+h)\geq\M f(x)$ for all sufficiently small $h>0$, which contradicts the assumption $(\widetilde M f)'(x)<0$. Thus, a maximizing interval must be in the form of $[a,x]$.

Fix such an interval $[a,x]$. Varying its left endpoint and using the continuity of $f$ and the first derivative test, we obtain
\[
 0=\left.\frac{d}{du}
       \left(\frac{1}{x-u}\int_u^x f(t)\,dt\right)\right|_{u=a}
   =\frac{\widetilde M f(x)-f(a)}{x-a}.
\]
Hence $f(a)=\widetilde M f(x)$. On the other hand, the function
\[
 t\longmapsto \frac{1}{t-a}\int_a^t f(u)\,du
\]
touches the graph of $\M f$ from below at $t=x$ and is bounded by $\M f$ from above. Since both functions are differentiable there, their derivatives must agree at $t=x$ by the boundedness, giving
\[
 (\widetilde M f)'(x)
   =\left.\frac{d}{dt}\left(\frac{1}{t-a}\int_a^t f(u)\,du\right)\right|_{t=x}=\frac{f(x)-\widetilde M f(x)}{x-a}.
\]
In particular,
\begin{equation}\label{eq:formula_a}
a=x-\frac{f(x)-\widetilde M f(x)}{(\widetilde M f)'(x)},
\end{equation}
so the maximizing interval is unique. Denote its left endpoint by
$a(x)$. This completes the proof of \eqref{eq:local-identities} and also shows that $a(x)$ is differentiable when $\M f$ is twice differentiable.

Moreover, consider a point $x$ such that $f$ is $C^2$ at $x$ and $f'(a(x))\neq0$. Using the identity $f(a(x))=\M f(x)$ and differentiating both sides, we directly get 
\[f'(a(x))a'(x)=(\M f)'(x)\Rightarrow a'(x)=\frac{(\M f)'(x)}{f'(a(x))}.\]

Finally, differentiating \eqref{eq:local-identities} gives
\begin{align*}
 (\widetilde M f)''(x)
 &=\frac{f'(x)-(\widetilde M f)'(x)}{x-a(x)}
   -\frac{(f(x)-\widetilde M f(x))(1-a'(x))}{(x-a(x))^2}\\
 &=\frac{f'(x)-(2-a'(x))(\widetilde M f)'(x)}{x-a(x)},
\end{align*}
which proves \eqref{eq:weigt-identities}. Applying the argument to $t\mapsto f(-t)$
gives the case $(\widetilde M f)'(x)>0$, with maximizing interval
$[x,a(x)]$ and the same formulas, now with $x-a(x)<0$.
\end{proof}

\subsection{Smooth pieces and endpoint injectivity}\label{sec:smooth_pieces}
Under the assumption that $f\ge0$ is continuous, compactly supported, and affine on each of finitely many intervals, the following lemmas provide a way to sum the local calculations over all of $\R$ without intersection.
\begin{lemma}[Finite smooth partition]\label{lem:branches}
There is a finite partition of $\R$ into points and open intervals $J$ such that every $f|_J$ is affine, every $\M f|_J$ is smooth, and
\[
 (\M f)'|_J>0,\qquad (\M f)'|_J<0,\qquad\text{or}\qquad
 (\M f)'|_J\equiv0.
\]
The partition can also be chosen so that $(\M f)'$ is monotone on each $J$.
\end{lemma}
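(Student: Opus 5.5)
The plan is to describe $\M f$ as the upper envelope of finitely many explicit real-analytic families, and to read off the partition from the finitely many points where these families cross.

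\textbf{Step 1: reduce to intervals anchored at $x$.} By the proof of Lemma~\ref{lem:endpoint}, the supremum defining $\M f(x)$ can be taken over intervals having $x$ as an endpoint. Hence
\[
\M f(x)=\max\{f(x),L(x),R(x)\},
\]
where
\[
L(x)=\sup_{u<x}\frac{1}{x-u}\int_u^x f,
\qquad
R(x)=\sup_{v>x}\frac{1}{v-x}\int_x^v f.
\]

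\textbf{Step 2: write $L$ and $R$ as finite maxima of analytic functions.} Let $t_1<\dots<t_N$ be the breakpoints of $f$, and let $\ell$ be the length bound from Section~\ref{sec:maximizing_interval}.

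Fix $x$ in one of the open intervals between consecutive breakpoints and consider the candidate endpoints $u$. On each interval $[t_i,t_{i+1}]$, the average $A(u,x)=\frac{1}{x-u}\int_u^x f$ is a rational function of $(u,x)$. An interior critical point satisfies $f(u)=A(u,x)$. Since $f$ is affine in $u$, this is a polynomial equation of degree at most two, so its solutions $u=u_i^{\pm}(x)$ are algebraic functions of $x$. The candidates are therefore these roots, the breakpoints $t_i$, and the limit $u\to x^-$, which gives the value $f(x)$.

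Consequently, on each breakpoint-free interval, $\M f$ is the maximum of finitely many real-analytic (algebraic) functions $g_k$, defined on subintervals cut out by the finitely many points where the discriminants vanish or roots leave $[t_i,t_{i+1}]$.

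\textbf{Step 3: refine to a finite partition.} Two distinct analytic functions on an interval agree at only finitely many points on compact subsets, or else they coincide. Outside a large compact set, $f=0$, and $\M f$ is given by a single explicit formula of the form $c/(x-a)$ (or the mirror image), which is monotone.

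Combining these, we get a finite partition on each piece of which $\M f$ equals a single $g_k$, and $f$ is affine there. On each piece, $(\M f)'$ and $(\M f)''$ are analytic, so each either vanishes identically or has finitely many zeros. Adding these zeros as partition points yields pieces on which $(\M f)'$ has constant sign (or is identically $0$) and $(\M f)''$ has constant sign, so $(\M f)'$ is monotone.

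\textbf{Main obstacle.} The delicate step is justifying finiteness of the crossing set, in particular checking that no accumulation occurs near breakpoints or near endpoints of the root domains. The tool for this is that everything is algebraic in $x$: any two distinct branches have finitely many intersections on each of finitely many intervals. The unbounded tails are handled separately by the explicit formula for $\M f$ there.
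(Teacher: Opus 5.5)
Your route is the paper's. On each affine piece you write $\M f$ as the upper envelope of finitely many algebraic candidates coming from the first-order condition $f(u)=A(u,x)$. You cut where discriminants vanish or roots reach the ends of the pieces, and then refine at crossings and at zeros of $(\M f)'$ and $(\M f)''$. Keeping the breakpoint averages $A(t_i,x)$ as extra candidates is harmless. It also quietly covers the degenerate case $m_j=0$, where the critical-point equation does not involve $u$ and $A(u,x)=b_j$ along the whole flat piece; the paper lists that case separately as the constant candidate $b_j$.

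However, your treatment of the tails rests on a false claim: $\M f$ is not of the form $c/(x-a)$ with fixed $a,c$ on any interval outside the support. Since $f$ is continuous, $A(\cdot,x)$ is $C^1$, so every maximizing left endpoint satisfies $f(a(x))=\M f(x)$. Because $\M f$ is strictly decreasing for $x>R$, the endpoint $a(x)$ must move. For example, for $f(t)=(1-|t|)_+$ and $x>1$ one gets $a(x)=x-\sqrt{(x+1)^2-2}$ and $\M f(x)=x+1-\sqrt{(x+1)^2-2}$.

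The repair is to drop the separate tail argument and treat $(-\infty,t_1)$ and $(t_N,\infty)$ like the other pieces, using algebraicity rather than analyticity throughout. This also fixes your sentence that $(\M f)'$ and $(\M f)''$ ``are analytic, so each either vanishes identically or has finitely many zeros''. Analyticity on an open interval does not prevent zeros from accumulating at an endpoint or at infinity. What you need is the following fact. Let $h$ be analytic on an open interval $I$, bounded or not, with $Q(x,h(x))=0$ for some nonzero $Q\in\R[x,y]$, and write $Q=y^kQ_1$ with $Q_1(x,0)\not\equiv0$. If $h\not\equiv0$, its zeros are isolated in $I$, so $Q_1(x,h(x))=0$ on all of $I$ by continuity. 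Hence the zeros of $h$ lie among the finitely many real roots of $Q_1(x,0)$.

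Each candidate is a rational function plus a constant multiple of the square root of a polynomial of degree at most two. So differences of candidates, and their derivatives, are algebraic. This one fact then gives finitely many crossings and finitely many zeros of $(\M f)'$ and $(\M f)''$ on every piece, bounded or unbounded. That is exactly the role the irreducible polynomial $Q$ plays in the paper's proof.
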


\begin{proof}
Write $f(t)=m_it+b_i$ on its finitely many open affine intervals $I_i$. Let $P(t)=\int_{-\infty}^tf(y)dy$ and $P_i=P|_{I_i}$. On $I_i$, we have
\[P_i(t):=\tfrac12m_it^2+b_it+c_i\]
where $c_i$ is a constant such that $P(t)$ is continuous on $\mathbb{R}$. For $x\in I_i$, set
\[
 A(u,x):=\begin{cases}
\frac{P(x)-P(u)}{x-u}&x\neq u,\\
f(x)&x=u,
 \end{cases}
\]
which is the average of $f$ between $u$ and $x$. By \eqref{eq:one_side_suffices},
\[\M f(x)=\sup_{u\ne x}A(u,x).\]
If a maximizing endpoint $u=u_0$ lies in $I_j$, differentiation gives
\begin{equation*}
\begin{split}
&0=\partial_u A(u_0,x)
 =\frac{-f(u)}{x-u}+\frac{P(x)-P(u)}{{(x-u)}^2}=\frac{A(u,x)-f(u)}{x-u},\\
\Rightarrow&0=\frac{1}{2}m_ju^2-m_jxu+P_i(x)-b_jx-c_j.
\end{split}
\end{equation*}
For $m_j\ne0$, define
\[
 D_{ij}(x):=x^2-\frac{2}{m_j}\bigl(P_i(x)-b_jx-c_j\bigr),
 \qquad u_{ij}^{\pm}(x):=x\pm\sqrt{D_{ij}(x)},
\]
where $u_{ij}^{\pm}(x)$ are two candidates of endpoint $a(x)$.
Thus the possible values of $\M f(x)$, for $x\in I_i$, only come from the following Table \ref{tab:candidates_of_Mf}:

\begin{table}[ht]
\centering
\small
\renewcommand{\arraystretch}{1.4}
\begin{tabular}{c|c|c}
Case accounted for & Value & Condition for inclusion \\ \hline
Trivial averaging interval
& $f(x)$
& Always \\

Nontrivial averaging interval with $m_j\ne0$
& $m_ju_{ij}^{\pm}(x)+b_j$
& $D_{ij}(x)\geq0,u_{ij}^{\pm}(x)\in\overline{I_j}$ \\

Nontrivial averaging interval with $m_j=0$
& $b_j$
& $P_i(x)-b_jx-c_j=0$
\end{tabular}
\caption{Candidates for $\M f(x)$, organized by the location
of the possible maximizing endpoints. These cases exhaust the
possibilities.}
\label{tab:candidates_of_Mf}
\end{table}

\begin{figure}[ht]
\centering
\begin{tikzpicture}[x=1.32cm,y=2.15cm,
  typeone/.style={black,line width=1pt},
  typetwo/.style={blue,dashed,line width=.65pt},
  typethree/.style={red!85!black,line width=1.8pt},
  every node/.style={font=\small}]
  \draw[->,gray!70] (-3.15,0)--(6.4,0) node[right] {$x$};
  \draw[->,gray!70] (-3.08,-.06)--(-3.08,2.3)
    node[above,text=black!65] {value};
  \foreach \x in {-2,-1,0,1,2,3,4,5,6}
    \draw[gray!80] (\x,.024)--(\x,-.024)
      node[below,text=black] {$\x$};
  \foreach \y in {1,2}
    \draw[gray!80] (-3.115,\y)--(-3.045,\y)
      node[left=3pt,text=black] {$\y$};

  % Type 2: f(u_{i4}^+(x))=4-u_{i4}^+(x), x<2.
  \draw[typetwo] plot[domain=-3:-2,samples=70]
    (\x,{4-\x-sqrt(\x*\x-8*\x+4)});
  \draw[typetwo] plot[domain=-2:-1,samples=70]
    (\x,{4-\x-sqrt(2*\x*\x-4*\x+8)});
  \draw[typetwo] plot[domain=-1:1,samples=100]
    (\x,{4-\x-sqrt(\x*\x-6*\x+7)});
  \draw[typetwo] (1,{3-sqrt(2)})--(2,2);

  % Type 2: f(u_{i3}^-(x))=u_{i3}^-(x), 2<x<=9/2.
  \draw[typetwo] (2,2)--(4,{4-2*sqrt(2)});
  \draw[typetwo] plot[domain=4:4.5,samples=60]
    (\x,{\x-sqrt(\x*\x-8)});

  % Type 2: f(u_{i1}^-(x))=u_{i1}^-(x)+2, x>=9/2.
  \draw[typetwo] plot[domain=4.5:6,samples=100]
    (\x,{\x+2-sqrt(\x*\x+4*\x-8)});

  % Coincident type-2 values:
  % u_{i1}^-=-1 on (-1,1], and u_{i3}^+=1 on [-1,1).
  % u_{i1}^+=-2 for x<-2; u_{i4}^-=4 for x>4.
  \draw[typetwo] (-1,1)--(1,1);
  \draw[typetwo] (-3,0)--(-2,0) (4,0)--(6,0);

  % Type 1: f(x).
  \draw[typeone] (-3,0)--(-2,0)--(-1,1)--(1,1)
    --(2,2)--(4,0)--(6,0);

  % Type 3: height 1 for x in [-1,1] or x=9/2;
  % height 0 for x<=-2 or x>=4.
  \draw[typethree] (-3,0)--(-2,0) (-1,1)--(1,1) (4,0)--(6,0);
  \foreach \p in {(-1,1),(1,1),(4.5,1)}
    \fill[red!85!black] \p circle (1.65pt);
  \draw[gray!55,densely dotted] (4.5,0)--(4.5,.97);
  \node[below] at (4.5,-.035) {$\frac92$};

  % Legend, matching the reference image.
  \draw[typeone] (.6,-.52)--(1.1,-.52);
  \node[anchor=west] at (1.3,-.52) {Type 1: $f(x)$};
  \draw[typetwo] (.6,-.80)--(1.1,-.80);
  \node[anchor=west] at (1.3,-.80)
    {Type 2: admissible $m_j u_{ij}^{\pm}(x)+b_j$};
  \draw[typethree] (.6,-1.08)--(1.1,-1.08);
  \node[anchor=west] at (1.3,-1.08)
    {Type 3: admissible $b_j$};
\end{tikzpicture}
\caption{$\M f(x)$ is the maximum of these candidates}
\label{fig:placeholder}
\end{figure}
In the last row, one may choose any $u\in I_j\setminus\{x\}$ because every included value is an average or its zero-length limit. Conversely, the existence of $a(x)$ shows that the list contains the maximum. 
Consequently, if $x\in I_i$ and $\mathcal C_i(x)$ denotes the included values,
\begin{equation}\label{eq:finite-list}
 \M f(x)=\max\mathcal C_i(x).
\end{equation}
Based on Table \ref{tab:candidates_of_Mf}, first cut at the endpoints of $I_i$ and at the zeros of the nonzero polynomials in the table as follows:

\begin{table}[ht]
\centering
\begin{tabular}{c|c}
Polynomials & Possible change its zero separates \\ \hline
$D_{ij}(x)$ & A real, nondegenerate endpoint $u_{ij}^{\pm}(x)$ appears or disappears\\
$D_{ij}(x)-(v-x)^2$ & The moving endpoint reaches a fixed boundary $v$ of $I_j$\\
$P_i(x)-b_jx-c_j$ when $m_j=0$ & The constant candidate $b_j$ becomes available at $x$
\end{tabular}
\vspace{2mm}
\caption{Polynomials needed for smooth partition}
\label{tab:polynomial_cuts}
\end{table}

In Table \ref{tab:candidates_of_Mf}, while the first type candidates always exist, the second type candidates need to satisfy two conditions: $D_{ij}(x)\geq0$ and $u_{ij}^\pm(x)\in\overline{I_j}$. These two conditions corresponding to two families of polynomials in Table \ref{tab:polynomial_cuts}, respectively: The first one is $D_{ij}(x)$, whose zeros determine the existence of $u_{ij}^\pm(x)$; the second one is $D_{ij}(x)-{(v-x)}^2$, where $v$ is a boundary point of $I_j$. Its zeros satisfy $v=x\pm\sqrt{D_{ij}(x)}=u_{ij}(x)$, so its location of zeros determine whether $u_{ij}^\pm(x)\in\overline{I_j}$ or not. As for the third type candidates, its condition is $P_i(x)-b_j(x)-c_j=0$ and $m_j=0$, so the location of zeros of the third family of polynomials in Table \ref{tab:polynomial_cuts} suffices to determine its existence. Hence, with these finitely many polynomials mentioned in Table \ref{tab:polynomial_cuts}, we can use their zeros and endpoints of intervals $I_j$ to partition the real numbers into finitely many intervals such that, on each interval $J$, $f|_J$ is affine and every candidate of $\M f$ appears either on the whole interval $J$ or nowhere on $J$. 

Next, we aim to show that the intersection of these candidate curves is either finite or a subinterval. Note that every candidate is both algebraic and analytic. Let $k_1(x),k_2(x)$ be two candidates and construct a nonzero irreducible polynomial $Q(x,y)\in\mathbb{R}[x,y]$ such that $Q(x,k_1(x)-k_2(x))=0$. Prove by contradiction. Suppose that there are infinitely many intersection points. Let $x$ be an intersection. Then, we have $k_1(x)=k_2(x)$ and $Q(x,0)=0$ has infinitely many solutions. Together with $Q\in\mathbb{R}[x,y]$, we must have $y\mid Q(x,y)$. Thus, by irreducibility, $Q(x,y)=cy$ for some constant $c\in\mathbb{R}$. In this case, $k_1(x)\equiv k_2(x)$. In summary, the intersection of candidates is either finite or a subinterval.

Since the intersection is either finite or a subinterval, there exists a finite refinement of the previous partition such that, on each interval $J$, $f|_J$ is affine and $\M f|_J$ is identical to a candidate of $\M f$ in Table \ref{tab:candidates_of_Mf} and thus $\M f$ is smooth on $J$. 

Since $\M f|_J$ is an algebraic function, via the algebraic relation, there exists a refinement of this partition such that on each interval $J$, $(\M f)'$ is monotone and 
\[(\M f)'|_J>0,\quad (\M f)'|_J<0,\quad\text{or}\quad (\M f)'|_J\equiv0.\]
\end{proof}
\begin{lemma}\label{lem:refined_partition}
Let $J$ be an open interval in the partition of Lemma \ref{lem:branches} with $(\M f)'|_J$ nonzero,
\[\mathcal{H}:=\{f(v)\mid v\text{ is a finite endpoint of an affine interval}\}\cup\{b_j\mid m_j=0\},\]
and $\mathcal{P}:=\{x\in\mathbb{R}\mid\M f(x)\in\mathcal{H},(\M f)'(x)\neq0\}$. Then, $\mathcal{P}$ is finite and on $J\setminus\mathcal{P}$, we have
\begin{equation}\label{eq:endpoint-signs}
 \begin{cases}
 f'(a(x))>0,\, a'(x)<0&\text{if }(\M f)'(x)<0,\\
 f'(a(x))<0,\, a'(x)<0&\text{if }(\M f)'(x)>0.
 \end{cases}
\end{equation}
At every finite partition point $x_0$, the limits $(\M f)'(x_0\pm)$
exist, are finite, and equal the corresponding one-sided derivatives.
If $x_0\in\Omega$, then
\begin{equation}\label{eq:upward-jump}
 (\M f)'(x_0-)\le(\M f)'(x_0+).
\end{equation}
\end{lemma}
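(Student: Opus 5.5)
The lemma has three assertions: $\mathcal P$ is finite, the signs in \eqref{eq:endpoint-signs} hold, and the one-sided derivatives at partition points behave as claimed. Throughout I work on a fixed interval $J$ from Lemma~\ref{lem:branches} on which $(\M f)'<0$. The case $(\M f)'>0$ then follows by applying the argument to $t\mapsto f(-t)$, as at the end of the proof of Lemma~\ref{lem:endpoint}.

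\emph{Finiteness of $\mathcal P$.} The set $\mathcal H$ is finite. On each partition interval $J$ where $(\M f)'\neq0$, $\M f$ is strictly monotone, so it takes each value $h\in\mathcal H$ at most once on $J$. There are finitely many such $J$ and finitely many partition points, so $\mathcal P$ is finite.

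\emph{Signs.} Take $x\in J\setminus\mathcal P$. By Lemma~\ref{lem:endpoint}, the maximizing interval is $[a(x),x]$ and $f(a(x))=\M f(x)\notin\mathcal H$.
\begin{itemize}
\item Since $f(a(x))\notin\mathcal H$, the point $a(x)$ is not a breakpoint of $f$. It also does not lie on a flat piece, because every flat value $b_j$ belongs to $\mathcal H$. Hence $f'(a(x))$ exists and is nonzero.
\item $f'(a(x))>0$: $a(x)$ is an interior maximizing endpoint, so by the first derivative test the average $A(u,x)$ is stationary in $u$ at $u=a(x)$. I then use the maximality condition to second order. The identity $\partial_uA(u,x)=(A(u,x)-f(u))/(x-u)$ shows that $\partial_uA<0$ just to the right of $a(x)$ if $f$ is decreasing there, since then $f(u)<f(a)=A$. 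That would mean moving the endpoint leftward increases the average, contradicting maximality. More simply: if $f'(a(x))<0$, then $f>\M f(x)$ just to the left of $a(x)$, and extending the interval leftward strictly raises the average.
\item $a'(x)<0$: by \eqref{eq:weigt-identities}, $a'(x)=(\M f)'(x)/f'(a(x))<0$.
\end{itemize}
In the mirrored case $(\M f)'>0$ the endpoint $a(x)$ lies to the right. The same reasoning gives $f'(a(x))<0$, and again $a'(x)=(\M f)'/f'(a)<0$.

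\emph{One-sided limits.} On each adjacent interval $J$, $(\M f)'$ is monotone (Lemma~\ref{lem:branches}) and bounded by $\Lip(f)$ by \eqref{eq:basic-Lip}. Hence the limits $(\M f)'(x_0\pm)$ exist and are finite. Since $\M f$ is Lipschitz, the mean value theorem on $(x_0,x_0+h)$ identifies these limits with the one-sided derivatives.

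\emph{Upward jump \eqref{eq:upward-jump}.} At $x_0\in\Omega$, fix a maximizing interval $I_0$ for $x_0$. I consider two cases.
\begin{itemize}
\item If $x_0$ is the right endpoint of $I_0=[a,x_0]$, then $g(t)=\frac1{t-a}\int_a^tf$ is $C^1$ near $x_0$, satisfies $g\le\M f$, and touches $\M f$ at $x_0$. This gives $(\M f)'(x_0-)\le g'(x_0)\le(\M f)'(x_0+)$.
\item If $x_0$ is the left endpoint, the symmetric function $t\mapsto\frac1{b-t}\int_t^bf$ plays the role of $g$.
\end{itemize}
If $x_0$ is interior to its maximizing interval, the reduction at the start of the proof of Lemma~\ref{lem:endpoint} shows that a one-sided subinterval with endpoint $x_0$ is also maximizing, so the same argument applies. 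In short, $\M f$ is locally a supremum of smooth functions touching from below, so it has no downward kinks.

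The main obstacle is the sign claim $f'(a(x))>0$. It needs the first-order stationarity at $a(x)$ combined with the observation that $f$ cannot exceed $\M f(x)$ just outside $a(x)$, so that extending the interval does not raise the average. It also requires excluding the degenerate cases, which is exactly what removing $\mathcal P$ accomplishes.
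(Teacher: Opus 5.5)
Your proposal is correct and follows the paper's proof essentially step for step. The shared steps are: finiteness of $\mathcal P$ from strict monotonicity of $\M f$ on each $J$; the sign of $f'(a(x))$ from maximality at the moving endpoint; $a'=(\M f)'/f'(a)$; bounded monotone one-sided limits plus the mean value theorem; and a $C^1$ minorant touching $\M f$ at $x_0$. Your leftward-extension argument is a first-order form of the paper's test $\partial_{uu}A(a(x),x)=-f'(a(x))/(x-a(x))\le0$. Your reduction of the interior case to a one-sided maximizing subinterval replaces the paper's constant minorant $Q$.

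One cleanup: drop your first heuristic about $\partial_uA$ just to the right of $a(x)$, because its sign is backwards. If $f'(a(x))<0$, then $\partial_uA>0$ there, and as written that step yields no contradiction. The ``more simply'' extension argument is the one that carries the proof.
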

\begin{proof}
For each $h\in\mathcal H$, remove the solutions of $\M f(x)=h$ from $J$.
There is at most one solution per $h\in\mathcal{H}$ because $\M f$ is strictly monotone on $J$. Since there are finitely many intervals, $\mathcal{P}$ is finite. After removing these solutions, on each remaining subinterval of $J$, continuity of $a(x)$ and $f(a(x))=\M f(x)\notin\mathcal H$ ensure that $a(x)$ stays in one $I_j$ with $m_j\ne0$.
The second derivative test at the maximizing endpoint now gives
\begin{equation}\label{eq:sgn_f'a}
0\ge\partial_{uu} A(a(x),x)=\frac{\frac{A(a(x),x)-f(a(x))}{x-a(x)}-f'(a(x))}{x-a(x)}+\frac{A(a(x),x)-f(a(x))}{{(x-a(x))}^2}
   =-\frac{f'(a(x))}{x-a(x)}
\end{equation}

because $A(a(x),x)=\M f(x)=f(a(x))$. On an interval $J$ where $(\M f)'\ne0$, Lemma~\ref{lem:endpoint} gives
\[
 a(x)=x-\frac{f(x)-\M f(x)}{(\M f)'(x)},\qquad f(a(x))=\M f(x).
\]
In particular, $a(x)<x$ when $(\M f)'<0$, and $a(x)>x$ when $(\M f)'>0$. Together with \eqref{eq:sgn_f'a}, this proves $f'(a(x))>0$ when $(\M f)'(x)<0$ and $f'(a(x))<0$ when $(\M f)'(x)>0$. Moreover, differentiating the identity $f(a(x))=\M f(x)$ directly yields
\[
 a'(x)=\frac{(\M f)'(x)}{f'(a(x))}<0
\]
 in both cases. Figure~\ref{fig:opposite-endpoint} illustrates the endpoint identity.
 
\begin{figure}[htbp]
\centering
\begin{tikzpicture}[x=3cm,y=3cm,>=stealth,font=\small]
 \pgfmathsetmacro{\a}{(1-sqrt(2))/2}
 \pgfmathsetmacro{\h}{(3-sqrt(2))/2}

 % Area under f on [a,1/2].
 \fill[blue!10]
   (\a,0)--(\a,\h)--(0,1)--(.5,.5)--(.5,0)--cycle;

 % Coordinate axes and graph of f.
 \draw[->] (-1.25,0)--(1.38,0) node[right] {$t$};
 \draw[->] (0,-.05)--(0,1.22);
 \draw[thick] (-1.2,0)--(-1,0)--(0,1)--(1,0)--(1.2,0);

 % Maximizing average and endpoints.
 \draw[blue!70!black,dashed] (-.65,\h)--(1.05,\h);
 \node[anchor=west] at (1.05,\h)
   {$\M f(\tfrac12)=\dfrac{3-\sqrt2}{2}$};
 \draw[dotted] (\a,0)--(\a,\h);
 \draw[dotted] (.5,0)--(.5,\h);
 \fill (\a,\h) circle(1.2pt);
 \fill (.5,.5) circle(1.2pt);

 % Labels.
 \node[below] at (-1,0) {$-1$};
 \node[below] at (\a,0) {$a(\tfrac{1}{2})$};
 \node[below] at (0,0) {$0$};
 \node[below] at (.5,0) {$x=\tfrac12$};
 \node[below] at (1,0) {$1$};
 \node[above right] at (.12,1.02) {$f(t)=(1-|t|)_+$};
\end{tikzpicture}
\caption{An example: $a(\tfrac{1}{2})=(1-\sqrt2)/2$ and $f(a)=\M f(\tfrac12)=(3-\sqrt2)/2$.}
\label{fig:opposite-endpoint}
\end{figure}

In Figure \ref{fig:opposite-endpoint}, the shaded area equals the area of the rectangle of width $\tfrac12-a$ and height $f(a)$. The maximizing endpoint lies on the increasing part of $f$ while $\tfrac{1}{2}$ lies on the decreasing part of $\M f$. By the maximality, $a(\tfrac{1}{2})$ will occur at the place where 
\[f(a)=\frac{1}{\tfrac{1}{2}-a}\int_a^{\tfrac{1}{2}}f.\]

The derivative $(\M f)'$ is monotone on every interval and satisfies
$|(\M f)'|\le\Lip(f)$ by \eqref{eq:basic-Lip}; hence its one-sided
limits at finite endpoints exist and are finite. For $h>0$ small,
the mean value theorem gives
\[
 \frac{\M f(x_0+h)-\M f(x_0)}h=(\M f)'(\xi_h),
 \qquad x_0<\xi_h<x_0+h.
\]
Letting $h\downarrow0$ identifies the right derivative with the right
limit; the left side is identical.

For $x_0\in\Omega$, choose a maximizing interval $[u,v]$ of $x_0$ and define,
for $t$ near $x_0$,
\[
 Q(t):=\begin{cases}
 \displaystyle\frac1{v-u}\int_u^v f,&u<x_0<v,\\[2mm]
 \displaystyle\frac1{t-u}\int_u^t f,&x_0=v,\\[2mm]
 \displaystyle\frac1{v-t}\int_t^v f,&x_0=u.
 \end{cases}
\]
Then $Q\in C^1$ at $x_0$, $Q\le\M f$, and $Q(x_0)=\M f(x_0)$.
For $h>0$, we have
\[\frac{\M f(x_0-h)-\M f(x_0)}{-h}\leq\frac{Q(x_0-h)-Q(x_0)}{-h},\quad\frac{Q(x_0+h)-Q(x_0)}h
 \le\frac{\M f(x_0+h)-\M f(x_0)}h.\]
Thus $(\M f)'(x_0-)\le Q'(x_0)\le(\M f)'(x_0+)$, proving
\eqref{eq:upward-jump}; see Figure~\ref{fig:touching-slopes} for a visual illustration.

\begin{figure}[htbp]
\centering
\begin{tikzpicture}[x=2cm,y=1.35cm,>=stealth,font=\small]
 \draw[->] (-1.55,0)--(1.8,0) node[right] {$t$};
 \draw[->] (0,-.12)--(0,2.45);
 \draw[thick,blue!70!black,domain=-1.35:0,samples=30]
   plot (\x,{1-0.5*\x+0.125*\x*\x});
 \draw[thick,blue!70!black,domain=0:1.15,samples=30]
   plot (\x,{1+\x+0.125*\x*\x});
 \draw[thick,domain=-1.35:1.35] plot (\x,{1+0.25*\x});
 \fill (0,1) circle(1.4pt);
 \node[below] at (0,0) {$x_0$};
 \node[right] at (1.35,1.3375) {$Q$};
 \node[right,blue!70!black] at (1.15,2.315) {$\M f$};
 \node[below,align=center] at (0,-.35)
 {$ (\M f)'(x_0-)\ \le\ Q'(x_0)\ \le\ (\M f)'(x_0+)$};
\end{tikzpicture}
\caption{Schematic of the touching inequality $Q\le\M f$,
$Q(x_0)=\M f(x_0)$. The left and right slopes of $\M f$ bound
$Q'(x_0)$. This picture illustrates the comparison argument, rather
than a particular maximal function.}
\label{fig:touching-slopes}
\end{figure}

\end{proof}

Let $\mathcal{P}'$ be the collection of endpoints of the finite partition provided by Lemma \ref{lem:branches}, define the refined partition $\mathcal{P}^*:=\mathcal{P}\cup\mathcal{P}'$, and 
\[E:=\{x\in\mathbb{R}\mid x\notin\mathcal{P},x\notin\mathcal{P}',(\M f)'(x)\neq0\}.\]
Write $E_+=\{x\in E\mid(\M f)'(x)>0\}$ and $E_-=\{x\in E\mid(\M f)'(x)<0\}$. The notation $a(x)$ will always mean the opposite endpoint of the unique maximizing interval at the evaluation point $x$.

\begin{lemma}\label{lem:injectivity}
The function $x\mapsto a(x)$ is injective on $E_-$ and on $E_+$. Its images from these two sets are disjoint.
\end{lemma}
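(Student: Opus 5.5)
The plan is to use the averaging characterization of the endpoint from Lemma~\ref{lem:endpoint} together with the uniqueness of the maximizing interval for injectivity, and the sign information \eqref{eq:endpoint-signs} of Lemma~\ref{lem:refined_partition} for disjointness of the images.

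\emph{Injectivity on $E_-$.} Take $x<y$ in $E_-$ and suppose $a(x)=a(y)=:u$. By Lemma~\ref{lem:endpoint}, $u<x<y$ and the maximizing intervals are $[u,x]$ and $[u,y]$. The identities \eqref{eq:local-identities} give
\[
 \frac1{x-u}\int_u^x f=\M f(x)=f(u)=\M f(y)=\frac1{y-u}\int_u^y f .
\]
Since the two averages coincide, $\frac1{y-x}\int_x^y f$ equals this same common value. The interval $[u,y]$ contains $x$ and has average $f(u)=\M f(x)$. Hence $[u,y]$ is a maximizing interval for $x$ that differs from $[u,x]$, because $y\neq x$. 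This contradicts the uniqueness of the maximizing interval asserted in Lemma~\ref{lem:endpoint}; that lemma applies because $x\in E$ lies in the interior of a partition interval on which $\M f$ is smooth with $(\M f)'<0$. (Equivalently, the maximizing interval at $x$ must be of the form $[a,x]$, and $[u,y]$ is not.)

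\emph{Injectivity on $E_+$.} The argument for $E_+$ is identical after reflecting $t\mapsto -t$, or by repeating it with intervals $[x,u]$ and $[y,u]$ where $x<y<u$.

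\emph{Disjointness of the images.} Suppose $x\in E_-$, $y\in E_+$ and $a(x)=a(y)=u$. Since $x,y\notin\mathcal P$, we have $\M f(x),\M f(y)\notin\mathcal H$. Because $f(u)=\M f(x)$, the point $u$ is not an endpoint of an affine interval. Thus $f$ is differentiable at $u$, and $u$ lies in some $I_j$ with $m_j\neq0$. Now \eqref{eq:endpoint-signs} gives $f'(u)=f'(a(x))>0$ from $x\in E_-$, and $f'(u)=f'(a(y))<0$ from $y\in E_+$. This is a contradiction.

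\emph{Main obstacle.} The only delicate point is making sure the hypotheses of the earlier lemmas hold at the points used. Uniqueness of the maximizing interval and the sign relations are stated only on the smooth pieces away from $\mathcal P^*$, and the definition of $E$ guarantees exactly this. In the disjointness step one must also check that $u$ is a genuine differentiability point of $f$, which is where excluding $\mathcal P$ is essential.
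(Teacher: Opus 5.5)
Your proposal is correct and follows the paper's proof. The only variation is in the injectivity step. The paper trims a short piece off the right end of $[a(x_2),x_2]$, using $f(x_2)<\M f(x_2)$, to produce an interval containing $x_1$ whose average strictly exceeds $\M f(x_1)$. You instead observe that $[u,y]$ is already a second maximizing interval for $x$, which contradicts the uniqueness (or right-endpoint form) in Lemma~\ref{lem:endpoint}; this is an equivalent and slightly cleaner finish. Your disjointness step is the paper's sign argument via \eqref{eq:endpoint-signs}, with a useful extra check that $u$ lies in an affine piece of nonzero slope because $\M f(x)\notin\mathcal H$.
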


\begin{proof}
Prove by contradiction. Suppose that $x_1<x_2$, $x_1,x_2\in E_-$, and $a(x_1)=a(x_2)$. Then,
\[\M f(x_1)=f(a(x_1))=f(a(x_2))=\M f(x_2).\]
By \eqref{eq:zero-contact}, we know that $(\M f)'(x_i)<0$ implies $f(x_i)<\M f(x_i)$ and thus $a(x_i)<x_i$ for $i=1,2$. Figure \ref{figure:position} illustrates an idea of the relation between $x_1,x_2$, and $a(x_1)$.

\begin{figure}[ht]
\centering
\begin{tikzpicture}[scale=.8]
        \begin{axis}[
            xmin=-2.5,xmax=2.5,ymin=0,ymax=5,
            xticklabels={,,,,,},
            yticklabels={,,,,,,},
        ]
            \addplot+[-,black, domain=-2.5:0, mark=none, samples=200] {0.5*x+3};
            \addplot+[-,black, domain=0:2, mark=none, samples=200] {-1*x+3};
            \addplot+[-,black, domain=2:2.5, mark=none, samples=200] {0.75*x-0.5};
            \draw[fill] (axis cs:{0.5,2.5}) circle [radius=1.5pt] node[above right] {$(x_1,f(x_1))$};
            \draw[fill] (axis cs:{1.5,1.5}) circle [radius=1.5pt]; 
            \node at (1.2,1) {$(x_2,f(x_2))$};
            \draw[fill] (axis cs:{-0.5,2.75}) circle [radius=1.5pt] node[above left] {$(a,f(a))$};
        \end{axis}
\end{tikzpicture}
\caption{The position of $x_1,x_2$, and $a:=a(x_1)=a(x_2)$}
\label{figure:position}
\end{figure}

Since $f$ is continuous and $f(x_2)<\M f(x_2)$, removing a short piece at the right end of $[a(x_2),x_2]$ removes only values below its average. Let the shortened interval be $[a(x_1),b]$ with $x_1<b$. Then, we have $x_1\in [a(x_1),b]$ but
\[\frac{1}{b-a(x_1)}\int_{a(x_1)}^bf>\frac{1}{x_2-a(x_2)}\int_{a(x_2)}^{x_2}f=\M f(x_2)=\M f(x_1),\]
which contradicts the definition of the maximal operator. By symmetry, $x\mapsto a(x)$ is also injective on $E_+$.

Additionally, by~\eqref{eq:endpoint-signs},
\[
 \{a(x):x\in E_-\}\subset\{t:f'(t)>0\},
 \qquad
 \{a(x):x\in E_+\}\subset\{t:f'(t)<0\}.
\]
Thus, the images are disjoint.
\end{proof}

\subsection{The derivative estimate}\label{sec:proof:piecewise_affine}
In this section, we will use the refined partition $\mathcal{P}^*$ to obtain the desired derivative estimates. We first justify the decay estimates that make all the following integrals finite. Choose $R>0$ with $\supp f\subset[-R,R]$, and fix a differentiability point $x>R$ of $\M f$. Since $f(x)=0<\M f(x)$, a maximizing interval of $x$ exists. Moreover, the maximizing interval has the form $[a(x),x]$, with $-R\le a(x)<R$. For a sufficiently small $h>0$, we have $\int_{x-h}^xf(y)dy=0$ and thus
\begin{equation*}
\begin{split}
\M f(x-h)-\M f(x)&\geq\frac{1}{(x-h)-a(x)}\int_{a(x)}^{x-h}f(y)dy-\frac{1}{x-a(x)}\int_{a(x)}^xf(y)dy\\
&=\frac{h}{(x-h-a(x))(x-a(x))}\int_{a(x)}^xf(y)dy.
\end{split}
\end{equation*}
i.e. $(\M f)'(x)<0$.
Thus, for $x>R$, by the definition of the maximal operator and Lemma \ref{lem:endpoint}, 
\[\M f(x)\leq\frac{\norm{f}_1}{x-a(x)}\leq\frac{\norm{f}_1}{x-R},\quad|(\M f)'(x)|\leq\frac{|\M f(x)|}{|x-a(x)|}\leq\frac{\norm{f}_1}{{(x-R)}^2}.\]
Reflection gives the similar estimates for the case $x<-R$. In summary,
\begin{equation}\label{eq:decay_estimate}
\M f(x)\le\frac{\norm{f}_1}{|x|-R},\quad|(\M f)'(x)|\le\frac{\norm{f}_1}{(|x|-R)^2}\quad\text{for all}\quad |x|>R.
\end{equation}

On $[-R-1,R+1]$, $|(\M f)'(x)|\le\Lip(f)$ by~\eqref{eq:basic-Lip}; outside $[-R-1,R+1]$ the preceding decay estimate \eqref{eq:decay_estimate} applies. Thus $\int_{\R}|(\M f)'(x)|^r\,dx<\infty$ for every $r>1/2$. Finally, the nonzero slopes of $f$ form a finite set, and Lemma~\ref{lem:refined_partition} ensures that $f'(a(x))$ is one of these slopes on $E$. Therefore $|f'(a(x))|$ is bounded above and away from zero for all $x\in E$. These facts establish the finiteness of all the integrals used below, without invoking the sharp inequality we are about to prove.

Our goal is the following proposition:
\begin{proposition}\label{prop:main_estimates}
Let $1<p<\infty$ and $f$ be nonnegative, continuous, compactly supported, and affine on each of finitely many intervals. Then, 
\begin{align}
 &\int_{\R} f'(x)(\M f)'(x)|(\M f)'(x)|^{p-2}\,dx
 \notag\\
 &\quad\ge
 \left(2-\frac1p\right)\int_{\R}|(\M f)'(x)|^p\,dx
 +\frac{p-1}{p}\int_E
 \frac{|(\M f)'(x)|^{p+1}}{|f'(a(x))|}\,dx,
 \label{eq:coercivity}\\[1mm]
 &\int_E |f'(a(x))|^{p-1}|(\M f)'(x)|\,dx
 \le\int_{\R}|f'(x)|^p\,dx.
 \label{eq:budget}
\end{align}
\end{proposition}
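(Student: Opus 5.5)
The plan is to prove the two inequalities separately: \eqref{eq:budget} by a change of variables along the endpoint map $a$, and \eqref{eq:coercivity} by integrating Weigt's identity \eqref{eq:weigt-identities} by parts on each smooth piece. Throughout, $f$ is the piecewise-affine function of Proposition~\ref{prop:main_estimates}, and we use the refined partition $\mathcal P^*$.

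\emph{The budget inequality \eqref{eq:budget}.} On each open interval of $\R\setminus\mathcal P^*$ contained in $E$, Lemma~\ref{lem:refined_partition} says that $a$ is $C^1$, strictly decreasing, and satisfies
\[
a'(x)=\frac{(\M f)'(x)}{f'(a(x))},\qquad |a'(x)|=\frac{|(\M f)'(x)|}{|f'(a(x))|}.
\]
Hence
\[
|f'(a(x))|^{p-1}|(\M f)'(x)|=|f'(a(x))|^{p}\,|a'(x)|.
\]
Substituting $t=a(x)$ on each such interval turns the left side of \eqref{eq:budget} into $\sum\int_{a(J)}|f'(t)|^p\,dt$. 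By Lemma~\ref{lem:injectivity}, the sets $a(J)$ are pairwise disjoint: $a$ is injective on $E_-$ and on $E_+$, and the two images are disjoint. Therefore this sum is at most $\int_\R|f'|^p$. There are finitely many intervals, so no convergence issue arises.

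\emph{The coercivity inequality \eqref{eq:coercivity}.} Set $g:=(\M f)'|(\M f)'|^{p-2}$. On each interval $J\subset E$, rewrite \eqref{eq:weigt-identities} as
\[
f'=(x-a)(\M f)''+(2-a')(\M f)'.
\]
Multiplying by $g$ gives
\[
f'g=\frac{x-a}{p}\,\frac{d}{dx}|(\M f)'|^p+2|(\M f)'|^p-a'|(\M f)'|^p .
\]
Integrating the first term by parts on $J$ produces $-\frac1p\int_J(1-a')|(\M f)'|^p$ plus boundary terms. Collecting terms, and using $-a'=|(\M f)'|/|f'(a)|$, the integrand becomes
\[
\Bigl(2-\tfrac1p\Bigr)|(\M f)'|^p+\tfrac{p-1}{p}\,\frac{|(\M f)'|^{p+1}}{|f'(a)|},
\]
which is exactly the right side of \eqref{eq:coercivity}. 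On intervals where $(\M f)'\equiv0$, both sides vanish. By \eqref{eq:zero-contact} the contact set contributes nothing, and outside $E$ there are only finitely many points. So it remains to show that the total boundary contribution is nonnegative.

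\emph{The boundary terms (the main obstacle).} By \eqref{eq:local-identities}, $(x-a)(\M f)'=f-\M f$, so
\[
\tfrac1p(x-a)|(\M f)'|^p=-\tfrac1p(\M f-f)\,g.
\]
This expression makes sense on all of $\R$ (it is $0$ where $(\M f)'=0$), and $\M f-f$ is continuous. Summing the integration-by-parts boundary terms over all pieces, each partition point $x_0$ contributes
\[
\tfrac1p(\M f-f)(x_0)\bigl(g(x_0+)-g(x_0-)\bigr),
\]
where the one-sided limits exist by Lemma~\ref{lem:refined_partition}. There are two cases.
\begin{itemize}
\item If $x_0\in\Omega$, then \eqref{eq:upward-jump} and the monotonicity of $s\mapsto s|s|^{p-2}$ give $g(x_0+)\ge g(x_0-)$, so the contribution is $\ge0$.
\item If $x_0\notin\Omega$, then $\M f(x_0)=f(x_0)$ and the contribution vanishes.
\end{itemize}
At $\pm\infty$, the decay estimate \eqref{eq:decay_estimate} together with $|x-a(x)|\le|x|+R$ gives $(x-a)|(\M f)'|^p=O(|x|^{1-2p})\to0$, so there are no boundary terms at infinity.

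The delicate point is checking that the glued boundary expression really is $-(\M f-f)g/p$ on both sides of every point of $\mathcal P^*$. This includes the points of $\mathcal P$, where $a$ may jump or where $\M f$ fails to be twice differentiable. Working with $-(\M f-f)g/p$ instead of with $a$ directly is what avoids having to control the jumps of $a$ there.
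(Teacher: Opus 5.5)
Your proposal is correct and follows essentially the same route as the paper. You prove \eqref{eq:budget} by the change of variables $t=a(x)$ together with Lemma~\ref{lem:injectivity}. You prove \eqref{eq:coercivity} by multiplying Weigt's identity by $(\M f)'|(\M f)'|^{p-2}$, integrating by parts on each smooth piece, and rewriting the boundary term as $-(\M f-f)(\M f)'|(\M f)'|^{p-2}/p$ via \eqref{eq:local-identities}; \eqref{eq:upward-jump} and \eqref{eq:decay_estimate} then make the glued boundary contributions nonnegative, which is exactly Lemma~\ref{lem:boundary-contributions}. The only point you leave implicit is the integrability of the bulk integrands on the unbounded pieces, which follows from \eqref{eq:decay_estimate} and the lower bound on $|f'(a(x))|$, as the paper notes just before the proposition.
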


To prove~\eqref{eq:coercivity}, we use a signed identity that treats increasing and decreasing intervals together. Choose the refined partition $\mathcal{P}^*$, and write its endpoints as
\[
 -\infty=t_0<t_1<\cdots<t_m<t_{m+1}=+\infty.
\]
On each open interval $(t_j,t_{j+1})$, the derivative $(\M f)'(x)$ has one fixed strict sign or is identically zero. The sets $E_-$ and $E_+$ are the unions of the intervals of the first two types. 

Work first on one nonconstant interval. The second-derivative formula~\eqref{eq:weigt-identities}, reads
\[
 f'(x)=(2-a'(x))(\M f)'(x)
 +(x-a(x))(\M f)''(x).
\]
It holds for either sign of $(\M f)'(x)$: $x-a(x)>0$ on a decreasing interval and $x-a(x)<0$ on an increasing interval. Multiply by $(\M f)'(x)|(\M f)'(x)|^{p-2}$ and integrate on a compact subinterval $[u,v]$. We obtain 
\begin{equation}\label{eq:int_weigt_identities}
\begin{split}
&\int_u^vf'(x)(\M f)'(x)|(\M f)'(x)|^{p-2}dx\\
&=\int_u^v(2-a'(x))|(\M f)'(x)|^pdx+\int_u^v(x-a(x))(\M f)''(x)(\M f)'(x)|(\M f)'(x)|^{p-2}dx.
\end{split}
\end{equation}
Observe that
\[
 \frac{d}{dx}|(\M f)'(x)|^p
 =p(\M f)'(x)|(\M f)'(x)|^{p-2}(\M f)''(x),
\]
which could be related to the second term of the right-hand side of equation \eqref{eq:int_weigt_identities}.
Thus, by integration by parts,
\begin{equation}\label{eq:branch-energy}
\begin{split}
 &\int_u^v f'(x)(\M f)'(x)|(\M f)'(x)|^{p-2}\,dx\\
 &=\int_u^v(2-a'(x))|(\M f)'(x)|^pdx+\int_u^v\frac{x-a(x)}{p}\frac{d}{dx}|(\M f)'(x)|^pdx\\
 &\quad=\left(2-\frac1p\right)
       \int_u^v |(\M f)'(x)|^p\,dx
       +\frac{1-p}{p}
       \int_u^va'(x)|(\M f)'(x)|^p\,dx\\
 &\qquad\quad-
 \left[\frac{(\M f(x)-f(x))(\M f)'(x)|(\M f)'(x)|^{p-2}}{p}\right]_{x=u}^{x=v}.
\end{split}
\end{equation}
Here we used
\[
 (x-a(x))(\M f)'(x)=f(x)-\M f(x)
\]
by \eqref{eq:local-identities}. Also, by~equations \eqref{eq:weigt-identities} and~\eqref{eq:endpoint-signs}, for every $x\in E$, 
\begin{equation}\label{eq:charge-pointwise}
 -a'(x)|(\M f)'(x)|^p
 =\frac{|(\M f)'(x)|^{p+1}}{|f'(a(x))|}
 \qquad(x\in E).
\end{equation}
\smallskip\noindent\emph{Passing to the endpoints of a smooth interval.}
We first let $u\downarrow t_j$ and $v\uparrow t_{j+1}$ within one nonconstant interval. All integrands in~\eqref{eq:branch-energy} are integrable, by the bounds at the beginning of this subsection and~\eqref{eq:charge-pointwise}. Thus the bulk integrals converge by dominated convergence theorem. Next, we will examine the contribution of a finite/infinite partition point $t_j$ in equation \eqref{eq:branch-energy} below.

Consider the integral in \eqref{eq:branch-energy} and let $u\downarrow t_j$ and $v\uparrow t_{j+1}$, where $t_j$ are the refined partition points in $\mathcal{P}^*$. For each finite partition point $t_j$, define
\[R_j^\pm:=\pm\frac{\M f(t_j)-f(t_j)}{p}(\M f)'(t_j\pm)\bigl|(\M f)'(t_j\pm)\bigr|^{p-2},\qquad 1\le j\le m,\]
where $r|r|^{p-2}$ is interpreted as $0$ at $r=0$.
At the infinite endpoints, define
\begin{equation*}
\begin{split}
R_0^+&:=\lim_{x\to-\infty}\frac{\M f(x)-f(x)}{p}(\M f)'(x)|(\M f)'(x)|^{p-2},\\
R_{m+1}^-&:=-\lim_{x\to+\infty}\frac{\M f(x)-f(x)}{p}(\M f)'(x)|(\M f)'(x)|^{p-2}.
\end{split}
\end{equation*}

\begin{lemma}\label{lem:boundary-contributions}
The boundary term in \eqref{eq:branch-energy} equals $R_j^+ + R_{j+1}^-$. Moreover, for $1\leq j\leq m$,
\[
R_j^+ + R_j^-\ge0,
\qquad
R_0^+=R_{m+1}^-=0.
\]
Consequently, the total boundary contribution is nonnegative:
\[
\sum_{j=0}^{m}\bigl(R_j^+ + R_{j+1}^-\bigr)
=
\sum_{j=1}^{m}\bigl(R_j^+ + R_j^-\bigr)
\ge0.
\]
\end{lemma}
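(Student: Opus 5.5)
We prove Lemma~\ref{lem:boundary-contributions} in three steps: identify the boundary term, show each finite partition point contributes a nonnegative amount, and show the infinite endpoints contribute nothing.

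\emph{Identifying the boundary term.} Set
\[
B(x):=\frac{(\M f(x)-f(x))(\M f)'(x)|(\M f)'(x)|^{p-2}}{p}.
\]
The boundary term in \eqref{eq:branch-energy} is $-[B]_{u}^{v}=B(u)-B(v)$. Both $\M f$ and $f$ are continuous. By Lemma~\ref{lem:refined_partition}, the one-sided limits $(\M f)'(t_j\pm)$ exist and are finite, and $r\mapsto r|r|^{p-2}$ is continuous for $p>1$ (with value $0$ at $r=0$). Letting $u\downarrow t_j$ and $v\uparrow t_{j+1}$ therefore gives $B(t_j^+)-B(t_{j+1}^-)=R_j^++R_{j+1}^-$, directly from the definitions of $R_j^\pm$. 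At infinite endpoints the same identification holds by definition of $R_0^+$ and $R_{m+1}^-$. On intervals where $(\M f)'\equiv0$ there is no branch identity, but the corresponding $R$'s carry the factor $0$, so they vanish and the bookkeeping is consistent. The telescoping regrouping $\sum_{j=0}^m(R_j^++R_{j+1}^-)=R_0^++R_{m+1}^-+\sum_{j=1}^m(R_j^++R_j^-)$ is then pure reindexing.

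\emph{Sign at finite points.} We have
\[
R_j^++R_j^-=\frac{\M f(t_j)-f(t_j)}{p}\Bigl(\phi\bigl((\M f)'(t_j+)\bigr)-\phi\bigl((\M f)'(t_j-)\bigr)\Bigr),\qquad \phi(r)=r|r|^{p-2}.
\]
Since $\M f\ge f$ (as $f\ge0$ is continuous, by letting intervals shrink), the prefactor is nonnegative. If $t_j\notin\Omega$, the prefactor vanishes and the sum is $0$. If $t_j\in\Omega$, then \eqref{eq:upward-jump} gives $(\M f)'(t_j-)\le(\M f)'(t_j+)$. Because $\phi$ is nondecreasing on $\R$, the bracket is $\ge0$, so $R_j^++R_j^-\ge0$.

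\emph{Infinite endpoints.} For $|x|>R$ we have $f(x)=0$, and \eqref{eq:decay_estimate} gives $\M f(x)\le\|f\|_1/(|x|-R)$ and $|(\M f)'(x)|\le\|f\|_1/(|x|-R)^2$. Hence
\[
|B(x)|\le \frac{\|f\|_1^{p}}{p\,(|x|-R)^{2p-1}}\to0,
\]
so $R_0^+=R_{m+1}^-=0$.

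The only delicate point is confirming that the sign convention in $R_j^\pm$ matches the limit of $B(u)-B(v)$. In particular, $R_j^+$ carries a $+$ because it arises from $B(u)$ at the left end of an interval, and $R_{j+1}^-$ carries a $-$ because it arises from $-B(v)$. Everything else reduces to monotonicity of $\phi$ together with the touching inequality.
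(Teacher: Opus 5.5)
Your proposal is correct and follows the paper's proof step for step. One-sided traces of $(\M f)'$ and continuity of $\M f-f$ identify the boundary term as $R_j^++R_{j+1}^-$. The touching inequality \eqref{eq:upward-jump}, monotonicity of $r\mapsto r|r|^{p-2}$, and a vanishing prefactor on the contact set give $R_j^++R_j^-\ge0$, and the decay estimates \eqref{eq:decay_estimate} remove the contributions at $\pm\infty$. You also state two points the paper leaves implicit: $\M f\ge f$ makes the prefactor nonnegative, and intervals where $(\M f)'\equiv0$ contribute nothing.
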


\begin{proof}
The derivative $(\M f)'$ is bounded and monotone on each
partition interval, so its one-sided traces at finite
partition points exist. Since $\M f-f$ is continuous,
the boundary term in \eqref{eq:branch-energy} satisfies
\[
-\left[
\frac{\M f(x)-f(x)}{p}
(\M f)'(x)|(\M f)'(x)|^{p-2}
\right]_{x=u}^{x=v}
\longrightarrow R_j^+ + R_{j+1}^-.
\]

Fix a finite partition point $t_j$. If $\M f(t_j)=f(t_j)$, then $R_j^+=R_j^-=0$. Otherwise,
$t_j\in\Omega$, and \eqref{eq:upward-jump} gives
\[
(\M f)'(t_j-)\le(\M f)'(t_j+).
\]
Since $r\mapsto r|r|^{p-2}$ is increasing for $p>1$,
\[
\begin{aligned}
R_j^+ + R_j^-
&=
\frac{\M f(t_j)-f(t_j)}{p}
\Bigl(
(\M f)'(t_j+)|(\M f)'(t_j+)|^{p-2}
\\
&\hspace{42mm}
-
(\M f)'(t_j-)|(\M f)'(t_j-)|^{p-2}
\Bigr)
\ge0.
\end{aligned}
\]

Finally, choose $R>0$ such that
$\operatorname{supp}f\subset[-R,R]$.
By the tail estimates \eqref{eq:decay_estimate}, for $|x|>R$,
\[
\left|
\frac{\M f(x)-f(x)}{p}
(\M f)'(x)|(\M f)'(x)|^{p-2}
\right|
\le
\frac{\|f\|_1^p}{p(|x|-R)^{2p-1}}
\longrightarrow0.
\]
Thus $R_0^+=R_{m+1}^-=0$. Summing the interval
contributions yields the stated identity and inequality.
\end{proof}

Now, we are ready to prove Proposition \ref{prop:main_estimates}.
\begin{proof}[Proof of Proposition \ref{prop:main_estimates}]
Now sum~\eqref{eq:branch-energy} over the finitely many nonconstant intervals $(t_j,t_{j+1})$, after taking the endpoint limits on each interval separately. Every finite endpoint appears exactly through
\begin{align}\label{eq:switch}
 \frac{\M f(t_j)-f(t_j)}{p}
 \Big(& (\M f)'(t_j+) |(\M f)'(t_j+)|^{p-2}-(\M f)'(t_j-) |(\M f)'(t_j-)|^{p-2}\Big),
\end{align}
and all those contributions are nonnegative. There is no contribution from infinity. Outside $E$, the derivative $(\M f)'(x)$ vanishes almost everywhere, so the omitted intervals and finitely many partition points contribute zero to the integrals. Using \eqref{eq:branch-energy} and \eqref{eq:charge-pointwise}, we conclude that
\begin{align*}
 &\int_{\R} f'(x)(\M f)'(x)|(\M f)'(x)|^{p-2}\,dx\\
 &\quad\ge\left(2-\frac1p\right)\int_{\R}|(\M f)'(x)|^p\,dx
 +\frac{p-1}{p}\int_E
 \frac{|(\M f)'(x)|^{p+1}}{|f'(a(x))|}\,dx.
\end{align*}
This completes the proof of \eqref{eq:coercivity}. 

For~\eqref{eq:budget}, use the first formula in~\eqref{eq:weigt-identities} and change variables on each smooth interval $J\subset E$:
\begin{align*}
 \int_J |f'(a(x))|^{p-1}|(\M f)'(x)|\,dx
 &=\int_J |f'(a(x))|^p|a'(x)|\,dx\\
 &=\int_{\{a(x):x\in J\}} |f'(t)|^p\,dt.
\end{align*}
Lemma~\ref{lem:injectivity} makes these image sets pairwise disjoint, also between the two signs of $(\M f)'$. Their sum is bounded by $\int_{\R}|f'(t)|^p\,dt$, proving~\eqref{eq:budget}.
\end{proof}

Next, we will first prove Theorem \ref{thm:main} for those functions $f\geq0$, which are continuous, compactly supported, and affine on each of finitely many intervals. Then, apply an approximation argument to show that Theorem \ref{thm:main} is valid for general function $f$ satisfying the assumptions mentioned in Theorem \ref{thm:main}.
\begin{proof}[Proof of \eqref{eq:main} in Theorem \ref{thm:main} for piecewise-affine $f$]
H\"older's inequality and~\eqref{eq:budget} give
\begin{align*}
 \norm{(\M f)'}_p^p
 &=\int_E
 \left(\frac{|(\M f)'(x)|^{p+1}}{|f'(a(x))|}\right)^{(p-1)/p}
 \left(|f'(a(x))|^{p-1}|(\M f)'(x)|\right)^{1/p}\,dx\\
 &\le
 \left(\int_E\frac{|(\M f)'(x)|^{p+1}}{|f'(a(x))|}\,dx\right)^{(p-1)/p}
 \norm{f'}_p.
\end{align*}
If $\norm{(\M f)'}_p=0$, the conclusion is immediate. Otherwise $\norm{f'}_p>0$, and the preceding inequality gives
\begin{equation*}
 \int_E\frac{|(\M f)'(x)|^{p+1}}{|f'(a(x))|}\,dx
 \ge
 \frac{\norm{(\M f)'}_p^{p^2/(p-1)}}
      {\norm{f'}_p^{p/(p-1)}}.
\end{equation*}
A second application of H\"older to the left side of~\eqref{eq:coercivity} yields
\begin{align*}
 \norm{f'}_p\norm{(\M f)'}_p^{p-1}
 \ge{}&\left(2-\frac1p\right)\norm{(\M f)'}_p^p\\
 &+\frac{p-1}{p}
 \frac{\norm{(\M f)'}_p^{p^2/(p-1)}}
      {\norm{f'}_p^{p/(p-1)}}.
\end{align*}
After division and multiplication by $p$, this becomes
\begin{equation*}
 (2p-1)\frac{\norm{(\M f)'}_p}{\norm{f'}_p}
 +(p-1)\left(\frac{\norm{(\M f)'}_p}{\norm{f'}_p}\right)^{(2p-1)/(p-1)}
 \le p.
\end{equation*}
This gives \eqref{eq:gamma}. Additionally, since $(2p-1)x+(p-1)x^{(2p-1)/(p-1)}$ is monotone increasing and by intermediate value theorem, \eqref{eq:gamma} has a unique solution $\gamma_p$ on $[0,1]$. Thus, this proves \eqref{eq:main} for piecewise-affine inputs. 
\end{proof}

\subsection{Approximation}\label{sec:approximation}
We will use lower semicontinuity of the Riesz variation under pointwise convergence: Assume $u_j(x)\to u(x)\in\R$ for every $x$ and $1\le p<\infty$. Fix a partition $P=\{x_0<\dotsb<x_m\}$ and write 
\[S_P(u):=\left(\sum_{j=1}^m\frac{{|u(x_j)-u(x_{j-1})|}^p}{{|x_j-x_{j-1}|}^{p-1}}\right)^\frac{1}{p}.\]
Then, by the convergence and the definition, for all $P$,
\begin{equation}\label{eq:variation-lsc}
S_P(u)=\liminf_{n\to\infty}S_P(u_n)\leq\liminf_{n\to\infty}\Var_p(u_n)\Rightarrow\Var_p(u)\leq\liminf_{n\to\infty}\Var_p(u_n).
\end{equation}
Indeed, pass to the limit along each fixed finite partition in~\eqref{eq:riesz}, and then take the supremum over partitions. For $p>1$, a finite right side gives local absolute continuity of the actual pointwise limit and the identity $\Var_p(u)=\norm{u'}_p$, by~\cite[Chapter IX \textsection 4, Theorem 7]{N}. 

\begin{proof}[Proof of \eqref{eq:main} in Theorem \ref{thm:main}]
Assume that $f\ge0$ is compactly supported and belongs to $W^{1,p}(\R)$, and use its locally absolutely continuous representative. Choose a bounded interval $[A,B]$ containing
$\operatorname{supp}f$ in its interior. Let $f_j$ be the
piecewise-linear interpolants on uniform partitions of $[A,B]$
whose mesh sizes tend to zero, extended by zero outside $[A,B]$.
Uniform continuity gives $f_j\to f$ uniformly. On each partition interval $I$, absolute continuity gives
\[
 f_j'|_I=\frac{1}{|I|}\int_I f'(t)\,dt.
\]
The corresponding cell-average operators are contractions on
$L^p([A,B])$ by Jensen's inequality and converge uniformly to the
identity on continuous functions. By density of continuous
functions in $L^p([A,B])$; see~\cite[Theorem 3.14]{Ru},
it follows that $f_j'\to f'$ in $L^p(\R)$. Since $f$ is bounded, the elementary inequality
\begin{equation}\label{eq:uniform-M}
 \norm{\M f-\M g}_\infty\le\norm{f-g}_\infty
\end{equation}
gives uniform convergence $\M f_j\to\M f$. Consequently, since $f_j$ satisfies the piecewise affine assumption, by the proof at the end of section \ref{sec:proof:piecewise_affine}, we know that $f_j$ satisfy \eqref{eq:main} and thus
\begin{equation}\label{eq:variation_bound}
\Var_p(\M f)
 \le\liminf_{j\to\infty}\Var_p(\M f_j)\leq\liminf_{j\to\infty}\gamma_p{\|f_j'\|}_p=\gamma_p\norm{f'}_p.
\end{equation}
In particular, since $p>1$ and $\Var_p(\M f)<\infty$, $\M f\in AC_{\mathrm{loc}}(\R)$. Thus, $(\M f)'\in L^p(\R)$ and $\|(\M f)'\|_p=\Var_p(\M f)$. Together with \eqref{eq:variation_bound}, this proves \eqref{eq:main} for compactly supported $f\in W^{1,p}$.

Next, suppose that $f\ge0$ is bounded, locally absolutely continuous, and $f'\in L^p$, let $\eta_R(x)$ equal one on $[-R,R]$, vanish outside $[-2R,2R]$, and be affine on the two transition intervals. Then
\[\norm{(\eta_Rf)'}_p\le\norm{f'}_p+\norm{f}_\infty(2R^{1-p})^{1/p}.\]
The error term $\norm{f}_\infty(2R^{1-p})^{1/p}$ tends to zero as $R\to\infty$. Since $0\le\eta_R(x)f(x)\le f(x)$ and these functions eventually agree with $f$ on each fixed averaging interval,
\begin{equation}\label{eq:cutoff-M}
 \M(\eta_R f)(x)\longrightarrow\M f(x).
\end{equation}
All these maximal functions are finite, as they are bounded by $\norm{f}_\infty$. By the same approximation argument in \eqref{eq:variation_bound}, since $\eta_Rf$ has compact support, we obtain
\[\Var_p(\M f)\leq\liminf_{R\to\infty}\Var_p(\M\eta_R f)\leq\liminf_{R\to\infty}\gamma_p\norm{(\eta_Rf)'}_p=\gamma_p\norm{f'}_p.\]
Similarly, by the boundedness of $\Var_p(\M f)$, we know $\norm{(\M f)'}_p=\Var_p(\M f)$, which completes the proof of \eqref{eq:main} for bounded $f$.

For a general nonnegative function $f$ in Theorem~\ref{thm:main}, suppose that $\M f(x_0)<\infty$. For every finite interval $[a,b]$ containing $x_0$ and $h\in\mathbb{R}$, by H\"{o}lder's inequality,
\begin{equation*}
\begin{split}
&\frac1{b-a}\int_{a+h}^{b+h}f-\frac1{b-a}\int_a^bf=\frac{1}{b-a}\int_a^b\int_t^{t+h}f'(u)dudt\\
&\leq\frac{1}{b-a}\int_a^b{\|f'\|}_p{|h|}^{1-1/p}dt=\|f'\|_p|h|^{1-1/p}.
\end{split}
\end{equation*}
Taking supremum over all intervals $[a,b]$ containing $x_0$, we know that 
\[\M f(x_0+h)=\sup_{x_0\in [a,b]}\frac{1}{b-a}\int_{a+h}^{b+h}f\leq\M f(x_0)+\|f'\|_p|h|^{1-1/p}.\]
Additionally, replacing $h$ by $-h$ and $x_0$ by $(x_0+h)$, we also have 
\[\M f(x_0)=\M f((x_0+h)-h)\leq\M f(x_0+h)+\|f'\|_p|h|^{1-1/p}.\]
To sum up,
\[|\M f(x_0+h)-\M f(x_0)|\le\|f'\|_p|h|^{1-1/p}.\]
Thus, once $\M f$ is finite at one point, it is finite and continuous everywhere. Apply the bounded-input estimate to $f_K(x)=\min\{f(x),K\}$. We have $|f_K'(x)|\le|f'(x)|$ almost everywhere and $\M f_K(x)\uparrow\M f(x)$.
Applying the same approximation argument in \eqref{eq:variation_bound}, since $f_K$ are bounded, 
\[\Var_p(\M f)\leq\liminf_{K\to\infty}\Var_p(\M f_K)\leq\liminf_{K\to\infty}\gamma_p\norm{f_K'}_p=\gamma_p\norm{f'}_p.\]
Similarly, by the boundedness of $\Var_p(\M f)$, we know $\norm{(\M f)'}_p=\Var_p(\M f)$, which completes the proof of \eqref{eq:main} for general nonnegative $f$.

Replacing a signed function $f$ by $|f|$ completes the upper bound, since $\M|f|=\M f$ and $|(|f|)'(x)|\le|f'(x)|$ almost everywhere.
\end{proof}
\subsection{Sharpness}\label{sec:sharpness}
For $0<\alpha\le1$, by the first derivative test, the expression minimized in~\eqref{eq:holder-constant} satisfies
\[
 (1+\alpha)t^\alpha+\alpha t^{1+\alpha}-1=0.
\]
The left-hand side is strictly increasing, negative at zero, and positive at one. Hence, there is a unique minimizer $t_\alpha\in(0,1)$, with
\begin{equation*}
 (1+\alpha)t_\alpha^\alpha+\alpha t_\alpha^{1+\alpha}=1,
 \qquad H_\alpha=\frac{1+t_\alpha^{1+\alpha}}{(1+\alpha)(1+t_\alpha)}=t_\alpha^\alpha.
\end{equation*}
In particular,
\begin{equation}\label{eq:H-equation}
 (1+\alpha)H_\alpha+\alpha H_\alpha^{1+1/\alpha}=1.
\end{equation}
Taking $\alpha=1-1/p$ gives $H_{1-1/p}=\gamma_p$, proving~\eqref{eq:critical-holder}.
\begin{proof}[Proof of the sharpness of Theorem \ref{thm:main}]
Fix $1-1/p<\theta\le1$ and put
\begin{equation*}
 f_\theta(x)=\max\{1-{|x|}^\theta,0\}.
\end{equation*}
Then $f_\theta\in W^{1,p}(\R)$ and
\begin{equation*}
 \norm{f_\theta'}_p^p
 =2\theta^p\int_0^1x^{p(\theta-1)}\,dx
 =\frac{2\theta^p}{p\theta-p+1}.
\end{equation*}
For a symmetrically decreasing function supported on $[-1,1]$, a maximizing interval for $x>0$ can be taken as $[-tx,x]$, with $0\le t\le1$. Indeed, on the right hand of $x$, $f_\theta$ is strictly decreasing and thus we may consider a maximizing interval in the form $[a,x]$. Additionally, since $f_\theta$ is symmetrically decreasing, we have $-x\leq a\leq0$. i.e. the maximizing interval can be written as $[-tx,x]$ with $0\leq t\leq1$.

For $0<x<1$, direct integration therefore gives
\begin{equation*}
\begin{split}
&\M f_\theta(x)
 =\max_{0\leq t\leq1}\frac{1}{(t+1)x}\int_{-tx}^xf_\theta(y)dy=1-x^\theta\min_{0\le t\le1}
 \frac{1+t^{1+\theta}}{(1+\theta)(1+t)}
 =1-H_\theta x^\theta\\
&\Rightarrow(\M f_\theta)'(x)=-\theta H_\theta x^{\theta-1}
\end{split}
\end{equation*}
Thus, by a straightforward estimate and \eqref{eq:main}, 
\begin{equation*}
\begin{split}
&\norm{(\M f_\theta)'}_p^p\geq2\int_0^1{(\theta H_\theta x^{\theta-1})}^pdx=\frac{2\theta^pH_\theta^p}{p(\theta-1)+1}=H_\theta^p\norm{f_\theta'}_p^p\\
&\Rightarrow H_\theta\leq\frac{\norm{(\M f_\theta)'}_p}{\norm{f_\theta'}_p}\leq\gamma_p
\end{split}
\end{equation*}

As $\theta\downarrow1-1/p$, continuity of the minimum yields $H_\theta\to\gamma_p$, so we know that
\begin{equation*}
 \lim_{\theta\downarrow1-1/p}
 \frac{\norm{(\M f_\theta)'}_p}{\norm{f_\theta'}_p}
 =\gamma_p.
\end{equation*}
Together with the proof in Section \ref{sec:approximation}, we complete the proof of Theorem~\ref{thm:main}.
\end{proof}

\section{The optimal constant and its endpoint limits}\label{sec:endpoints}

We first prove the monotonicity, limits, and asymptotic expansion of the optimal constant. We then show how the quantitative bounds of the classical endpoint theorems arise as limits of the finite-$p$ estimate.

\subsection{The optimal constant \texorpdfstring{$\gamma_p$}{gamma p}}
Now, we are ready to prove Proposition \ref{prop:coefficient}. 
\begin{proof}[Proof of Proposition \ref{prop:coefficient}]
The identity $\gamma_p=H_{1-1/p}$ was proved in~\eqref{eq:H-equation}. To see monotonicity, write
\[
 H_\alpha=\min_{0\leq t\leq 1}\frac{1+t^{1+\alpha}}{(1+t)(1+\alpha)}=\min_{0\le t\le1}
 \frac1{1+t}\int_{-t}^1|u|^\alpha\,du.
\]
Write $t_\alpha\in [0,1]$ be the minimizer of $\displaystyle g_\alpha(t):=\dfrac1{1+t}\int_{-t}^1|u|^\alpha\,du$ and thus $H_\alpha=g_\alpha(t_\alpha)$. 
For each $t$, this average $g_\alpha(t)$ is strictly decreasing in $\alpha$. Thus, for $\alpha<\alpha'$, we have
\[H_\alpha=g_\alpha(t_\alpha)>g_{\alpha'}(t_\alpha)\geq g_{\alpha'}(t_{\alpha'})=H_{\alpha'}\]
Therefore, $H_\alpha$ and thus $\gamma_p$ are strictly decreasing.

The expression in~\eqref{eq:holder-constant} extends continuously to $\alpha=0$, uniformly for $t\in[0,1]$, with value one. Hence $H_\alpha\to1$ as $\alpha\downarrow0$. At $\alpha=1$,~\eqref{eq:H-equation} gives $H_1^2+2H_1=1$, so $H_1=\sqrt2-1$. This proves~\eqref{eq:limits}.

For the first-order expansion, differentiate~\eqref{eq:H-equation} at $\alpha=1$. With $r=\sqrt2-1$, the implicit function theorem gives
\[
 \left.\frac{dH_\alpha}{d\alpha}\right|_{\alpha=1}
 =-\frac{r+r^2-r^2\log r}{2(1+r)}
 =-\frac{1-r-r^2\log r}{2(1+r)}.
\]
Taylor expansion at $\alpha=1$, using $\alpha=1-1/p$, proves~\eqref{eq:asymptotic} and completes Proposition~\ref{prop:coefficient}.
\end{proof}
\subsection{Endpoint behavior of \texorpdfstring{$\gamma_p$}{}}

By a limit argument, one may define $\gamma_1=1,\gamma_\infty=\sqrt{2}-1$. Then, both are optimal constants for the case $p=1$ and $p=\infty$, respectively \cite{ACP,AP}. Therefore, the quantitative endpoint result 
\begin{equation}\label{eq:gamma1inf}
\norm{(\M f)'}_1\leq\norm{f'}_1,\qquad \norm{(\M f)'}_\infty\leq(\sqrt{2}-1)\norm{f'}_\infty
\end{equation}
in \cite{ACP,AP} can be recovered by a limit argument as follows:
\begin{proof}[Proof of \eqref{eq:gamma1inf}]
First, consider the side $p\to\infty$. 

Let $f$ be compactly supported and Lipschitz. Then $f'\in L^p$ for every $p>1$. The basic estimate~\eqref{eq:basic-Lip} makes $(\M f)'$ bounded, and Theorem~\ref{thm:main} shows that $(\M f)'$ is in $L^p$. Therefore
\begin{align*}
 \norm{(\M f)'}_\infty
 &=\lim_{p\to\infty}\norm{(\M f)'}_p\\
 &\le\left(\lim_{p\to\infty}\gamma_p\right)
       \left(\lim_{p\to\infty}\norm{f'}_p\right)
 = (\sqrt2-1)\norm{f'}_\infty.
\end{align*}
For Lipschitz functions, these derivative norms equal the Lipschitz seminorms. This proves~\eqref{eq:endpoint-Lip} for compact support.

Next, apply this result to bounded Lipschitz functions by using the function $\eta_R$ from \eqref{eq:cutoff-M}. Finally, considering $f_K(x)=\min\{|f(x)|,K\}$ and letting $K\to\infty$ as we did in Section \ref{sec:approximation}, one can generalize the above result to all function $f\in\AC_\mathrm{loc}(\R),f'\in L^\infty(\R)$, and $\M f$ is finite at some point. This recovers the quantitative whole-line Lipschitz inequality~\eqref{eq:endpoint-Lip} in ~\cite{ACP} as a limit of Theorem~\ref{thm:main}.

Second, let $p\to1$. Suppose that $f$ is bounded and locally absolutely continuous, with $f'\in L^1\cap L^\infty$. Theorem~\ref{thm:main}, applied at $p=2$, ensures that $\M f$ is locally absolutely continuous. For any bounded interval $J$ and any $p>1$, H\"older's inequality gives
\[
 \int_J|(\M f)'(x)|\,dx\leq {|J|}^{1-1/p}\norm{(\M f)'}_p\le |J|^{1-1/p}\gamma_p\norm{f'}_p.
\]
As $p\to1$, $\int_J|(\M f)'(x)|\,dx\le\gamma_1\norm{f'}_1$ for all interval $J$. Pick $J=[-R,R]$ and let $R\to\infty$. This yields
\begin{equation}\label{eq:regular-one}
 \norm{(\M f)'}_1=\Var(\M f)\le\norm{f'}_1.
\end{equation}
The variation identity follows from local absolute continuity and integrability of the derivative.

We next pass to $BV$ inputs to recover the full quantitative bound. After replacing $f$ by $|f|$, assume that $f\ge0$ and $|Df|(\R)<\infty$. Let $\rho_\varepsilon$ be a nonnegative smooth compactly supported approximate identity and put $f_\varepsilon=f*\rho_\varepsilon$. A $BV$ representative is bounded, and
\[
 f_\varepsilon'=Df*\rho_\varepsilon\in L^1\cap L^\infty,
 \qquad \norm{f_\varepsilon'}_1\le|Df|(\R).
\]
Thus~\eqref{eq:regular-one} gives $\Var(\M f_\varepsilon)\le|Df|(\R)$. The maximal function $\M f$ is continuous by~\cite[Lemmas 3.3--3.4]{AP}. Fix $x\in\mathbb{R}$. Considering an interval $I$ containing $x$ and taking the supremum over interval $I$ gives
\begin{equation}\label{eq:limsup_leq_Mf}
\begin{split}
&\frac{1}{|I|}\int_If_\epsilon(t)dt=\int_\mathbb{R}\rho_\epsilon(y)\left(\frac{1}{|I|}\int_If(t-y)dt\right)dy\leq\int_\mathbb{R}\rho_\epsilon(y)\M f(x-y)dy\\
&\Rightarrow\M f_\epsilon(x)\leq\underbrace{\big((\M f)*\rho_\epsilon\big)(x)}_{\to\M f(x)}\\
&\Rightarrow\limsup_{\epsilon\to0}\M f_\epsilon(x)\leq\M f(x).
\end{split}
\end{equation}
For the reverse lower-limit inequality, fix any finite averaging interval $I$ containing $x$. Since $f_\varepsilon\to f$ locally in $L^1$, 
\[\frac{1}{|I|}\int_If(t)dt=\liminf_{\epsilon\to0}\frac{1}{|I|}\int_If_\epsilon(t)dt\leq\liminf_{\epsilon\to0}\M f_\epsilon(x)\Rightarrow\M f(x)\leq\liminf_{\epsilon\to0}\M f_\epsilon(x).\]
Together with \eqref{eq:limsup_leq_Mf}, we get $\M f_\epsilon\to\M f$ pointwise everywhere.

By \eqref{eq:variation-lsc} and \eqref{eq:regular-one}, 
\[
 \Var(\M f)\le\liminf_{\varepsilon\downarrow0}\Var(\M f_\varepsilon)
 \le|Df|(\R).
\]
This is the total-variation bound in~\eqref{eq:endpoint-BV}, obtained from Theorem~\ref{thm:main} by $p\downarrow1$ and approximation. The absolute continuity of $\M f$, and therefore the identification of this variation with $\norm{(\M f)'}_1$, is the classical result of Aldaz and P\'erez L\'azaro~\cite[Theorem 2.5]{AP}. We use that regularity result here without reproducing its proof.
\end{proof}

\section*{Acknowledgements} J.M. was partially supported by the Simons Foundation Grant \#453576. 

\section*{Statement on AI use}
The initial ideas and overall structure of this paper are from the authors. Then, in collaboration with AI models (including GPT 6 Astra, Claude Opus 5.5), we developed several technical lemmas, visual illustrations, and concise arguments to complete the proof. All mathematical proofs, derivations, and final conclusions were verified by the authors, who take full responsibility for them.

\end{document}